\newtheorem{thm}[equation]{Theorem}
\newtheorem{lem}[equation]{Lemma}
\newtheorem{prop}[equation]{Proposition}
\newtheorem{cor}[equation]{Corollary}
\theoremstyle{remark}
\newtheorem{rmk}[equation]{Remark}
\theoremstyle{definition}
\newtheorem{defi}[equation]{Definition}
\newcommand{\TspectraX}{Spt(\mathcal M _{X})}
\newcommand{\stablehomotopyX}{\mathcal{SH}_{X}}
\newcommand{\stablehomotopyY}{\mathcal{SH}_{Y}}
\newcommand{\stablehomotopyXprime}{\mathcal{SH}_{X'}}
\newcommand{\stablehomotopyW}{\mathcal{SH}_{W}}
\newcommand{\stablehomotopyk}{\mathcal{SH}_{k}}
\newcommand{\Hom}{\mathrm{Hom}}
\newcommand{\qorthogonalX}{\stablehomotopyX ^{\perp}(q)}
\newcommand{\qorthogonalY}{\stablehomotopyY ^{\perp}(q)}
\newcommand{\qplusoneorthogonalX}{\stablehomotopyX ^{\perp}(q+1)}
\newcommand{\qplusoneorthogonalY}{\stablehomotopyY ^{\perp}(q+1)}
\newcommand{\qplusoneorthogonalW}{\stablehomotopyW ^{\perp}(q+1)}
\newcommand{\qplusoneorthogonalinvY}{\mathcal{SH}^{\perp}_{p^{-1}Y}(q+1)}
\newcommand{\slicefilt}{\rm{sf}}
\numberwithin{equation}{section}
\begin{document}


\title{On the Functoriality of the Slice Filtration}


\author{Pablo Pelaez}
\address{Universit\"at Duisburg-Essen, Mathematik, 45117 Essen, Germany}
\email{pablo.pelaez@uni-due.de}


\subjclass[2010]{Primary 14F42}

\keywords{$K$-theory, Mixed Motives, Motivic Atiyah-Hirzebruch Spectral Sequence, 
					Slice Filtration}


\begin{abstract}
	Let $k$ be a field with resolution of singularities, and $X$ a
	separated $k$-scheme of finite type
	with structure map $g$.  We show that the slice filtration
	in the motivic stable homotopy category
	commutes with pullback along $g$.  Restricting the field further
	to the case of characteristic zero, we are able to
	compute the slices of
	Weibel's
	homotopy invariant $K$-theory \cite{MR991991}
	extending the result of
	Levine \cite{MR2365658}, and also the zero slice of the sphere
	spectrum extending the result of Levine \cite{MR2365658}
	and Voevodsky \cite{MR2101286}.  We also show that the
	zero slice of the sphere spectrum is a strict cofibrant ring spectrum
	$\mathbf{HZ}_{X}^{\slicefilt}$
	which is stable under pullback and that all the slices have
	a canonical structure of strict modules over $\mathbf{HZ}_{X}^{\slicefilt}$.
	If we consider rational coefficients and assume that $X$
	is geometrically unibranch then relying on the work
	of Cisinski and D{\'e}glise \cite{mixedmotives}, we deduce that
	the zero slice of the sphere spectrum is given by Voevodsky's
	rational motivic cohomology spectrum $\mathbf{HZ}_{X}\otimes \mathbb Q$
	and that the slices have transfers.  This proves several conjectures
	of Voevodsky \cite[conjectures 1, 7, 10, 11]{MR1977582}
	in characteristic zero. 
\end{abstract}


\maketitle


\begin{section}{Introduction}
		\label{Introd}
		
	The goal of this paper is
	the study of the behavior with respect to pullback of 
	the slice filtration introduced by Voevodsky in motivic 
	homotopy theory \cite{MR1977582}.
	We introduce a general criterion (see Theorem \ref{thm.criterion})
	which guarantees that the slice filtration commutes
	with pullback and verify that it holds (see Theorem \ref{thm.compatibility-with-res-sing})
	on the category of schemes of finite
	type (not necessarily smooth) over a field $k$ with resolution of singularities.
	
	In the last section of the paper some interesting applications are given for
	base schemes over a field $k$ of characteristic zero.  Among them,
	we are able to compute the zero slice of the sphere spectrum 
	(see Theorem \ref{thm.computation-slices}\eqref{thm.computation-slices.1})
	extending a result of
	Levine \cite{MR2365658} and Voevodsky \cite{MR2101286}, 
	and all the slices of Weibel's homotopy invariant K-theory
	(see Theorem \ref{thm.computation-slices}\eqref{thm.computation-slices.4})
	extending a result of Levine \cite{MR2365658}.
	This allows us to introduce a family of triangulated categories given by the homotopy
	category associated to the category of strict modules over the zero slice of the sphere
	spectrum (see Definition \ref{def.slicefilt-motives}), 
	which provide a natural framework for a theory of mixed motives over
	the category of $k$-schemes of finite type, since the construction:
		\begin{enumerate}
			\item is naturally equipped with the formalism of Grothendieck's six operations
						(see Theorem \ref{thm.motivic-cats}).
			\item is naturally equivalent to Voevodsky's triangulated category of motives 
				when the base scheme is a field
				(see Theorem \ref{thm.comparing-motcats}),
				this holds with integral coefficients so the construction may be a useful
				tool for the study of torsion
				in motivic cohomology.
			\item is  equipped with a canonical spectral sequence converging to Weibel's homotopy
				invariant $K$-theory.  This follows from
				our computation of the slices for homotopy invariant $K$-theory.
		\end{enumerate}	
		
\subsection*{Notation}
		
	In all the categories under consideration, $0$ will be the final object and
	$\cong$ will denote that two objects are isomorphic.	
	
	Let $X$ be a Noetherian separated scheme of finite Krull dimension, and $\mathcal M _{X}$
	be the category of pointed simplicial presheaves in the smooth Nisnevich site $Sm_{X}$ over $X$ equipped
	with the motivic Quillen model structure
	\cite{MR0223432} 
	introduced in \cite[Thm. A.17]{MR2597741}.  
	We define $T_{X}$ in $\mathcal M_{X}$ as the pointed simplicial presheaf
	represented by $S^{1}\wedge \mathbb G_{m}$, where
	$\mathbb G_{m}$ is the multiplicative group $\mathbb A^{1}_{X}-\{ 0 \}$ pointed by $1$, and $S^{1}$
	denotes the simplicial circle.  Given an arbitrary integer $r\geq 1$,
	$S^{r}$ (respectively $\mathbb G _{m}^{r}$) will denote the iterated smash
	product $S^{1}\wedge \cdots \wedge S^{1}$ (respectively
	$\mathbb G _{m}\wedge \cdots \wedge \mathbb G _{m}$) with $r$-factors;
	by definition, $S^{0}=\mathbb G _{m}^{0}$ will be the pointed simplicial
	presheaf $X_{+}$ represented by the base scheme $X$.
	We will write $T^{r}_{X}$ for $S^{r}\wedge \mathbb G _{m}^{r}$.
	
	Let $Spt(\mathcal M _{X})$ denote Jardine's category of symmetric $T_{X}$-spectra on 
	$\mathcal M _{X}$ equipped with the motivic model structure defined in 
	\cite[Thm. A.38]{MR2597741} and
	$\stablehomotopyX$ denote its homotopy category, which is triangulated.
	
	For every integer $q\in \mathbb Z$, we consider the following family of symmetric $T_{X}$-spectra
	
		\[	C^{q}_{\mathit{eff}}(X)=\{ F_{n}(S^{r}\wedge \mathbb G _{m}^{s}\wedge U_{+}) 
			\mid n,r,s \geq 0; s-n\geq q; U\in Sm_{X}\}
		\]
	where $F_{n}$ is the left adjoint to the $n$-evaluation functor
		\[	\xymatrix@R=0.5pc{Spt(\mathcal M _{X}) \ar[r]^-{ev_{n}}& \mathcal M _{X}\\
							(E^{m})_{m\geq 0}\ar@{|->}[r] & E^{n}}
		\]		

	Voevodsky \cite{MR1977582} defines the slice filtration as the following family of triangulated subcategories of $\stablehomotopyX$
		\[	\cdots \subseteq \Sigma _{T}^{q+1}\stablehomotopyX^{\mathit{eff}} \subseteq \Sigma _{T}^{q}
			\stablehomotopyX^{\mathit{eff}}
			\subseteq \Sigma _{T}^{q-1}\stablehomotopyX^{\mathit{eff}} \subseteq \cdots
		\]
	where $\Sigma _{T}^{q}\stablehomotopyX^{\mathit{eff}}$ is the smallest full triangulated subcategory of 
	$\stablehomotopyX$ which contains
	$C^{q}_{\mathit{eff}}(X)$ and is closed under arbitrary coproducts.

	It follows from the work of Neeman \cite{MR1308405}, \cite{MR1812507} that the inclusion
		\[	i_{q}:\Sigma _{T}^{q}\stablehomotopyX^{\mathit{eff}}\rightarrow \stablehomotopyX
		\]
	has a right adjoint $r_{q}:\stablehomotopyX \rightarrow \Sigma _{T}^{q}\stablehomotopyX^{\mathit{eff}}$, 
	and that the following functors
		\begin{align*}
				f_{q} & : \stablehomotopyX \rightarrow \stablehomotopyX \\
				s_{q} & :\stablehomotopyX \rightarrow \stablehomotopyX
		\end{align*}
	are triangulated, where $f_{q}$ is defined as the 
	composition $i_{q}\circ r_{q}$, and $s_{q}$ is characterized by the fact
	that for every $E\in \stablehomotopyX$, we have the following distinguished triangle in $\stablehomotopyX$
		\[	\xymatrix{f_{q+1}E \ar[r]^-{\rho _{q}^{E}}& f_{q}E \ar[r]^-{\pi _{q}^{E}}& s_{q}E 
			\ar[r]& S^{1}\wedge f_{q+1}E}
		\]
	We will refer to $f_{q}E$ as the $(q-1)$-connective cover of $E$, and to $s_{q}E$ as the $q$-slice of $E$.
	It follows directly from the definition that the $q$-slice of $E$ satisfies the following property:
		\[	\Hom _{\stablehomotopyX}(K,s_{q}E)=0
		\]
	for every symmetric $T_{X}$-spectrum $K$ in $\Sigma _{T}^{q+1}\stablehomotopyX^{\mathit{eff}}$.

\end{section}
\begin{section}{A general criterion}
		\label{sect-1}

	In the rest of this section $g:X\rightarrow Y$ will be a map of schemes, 
	where $X$ and $Y$ are Noetherian, separated
	and of finite Krull dimension.  Our goal is to
	introduce a general criterion which implies the compatibility between
	the slice filtration and pullback along $g$.
		
	The $2$-functor 
		\[	X\mapsto \stablehomotopyX
		\]
	is homotopic stable in the sense of Ayoub \cite[chapter 4]{MR2438151}
	and in particular is equipped with the formalism of Grothendieck's
	six operations \cite[Scholium 1.4.2]{MR2423375}.  Hence, given a map
	$g:X\rightarrow Y$ of schemes, there exists a pair of adjunctions between
	triangulated functors:
		\begin{align*}	(\mathbf L g^{\ast}, \mathbf R g_{\ast}, \varphi):
				\stablehomotopyY \rightarrow \stablehomotopyX \\
				(g_{!}, g^{!}, \psi):
				\stablehomotopyX \rightarrow \stablehomotopyY
		\end{align*}
	where the functor $\mathbf L g^{\ast}$ is characterized by the following property:
	Given $U\in Sm_{Y}$,
	$\mathbf L g^{\ast}(F_{0}(U_{+}))=g^{\ast}(F_{0}(U_{+}))=F_{0}(X\times _{Y}U _{+})$.
	
	If $g:X\rightarrow Y$ is a smooth map of finite type, the functor
	$\mathbf L g^{\ast}$ admits a left adjoint 
			\[	\mathbf L g_{\sharp} :\stablehomotopyX \rightarrow 	
				\stablehomotopyY
			\]
	which is also triangulated, and is characterized by the following property:
	Given $U\in Sm_{X}$ with structure map $u$,
	$\mathbf L g_{\sharp}(F_{0}(U_{+}))=F_{0}(U_{+})$,
	where we consider $U$ as a scheme over $Y$ with structure map
	$g\circ u$ (see \cite[Prop. 1.23(2)]{MR1813224}).
	
	Furthermore, these functors satisfy the localization axiom:
	
	\begin{thm}
			\label{thm.localization}
		Let $i:Z\rightarrow X$ be a closed immersion, and $j:U\rightarrow X$ its
		open complement.  Then for every symmetric 
		$T_{X}$-spectrum $E\in \stablehomotopyX$, there exists
		a canonical distinguished triangle in $\stablehomotopyX$:
			\[	\mathbf L j_{\sharp}\; \mathbf L j^{\ast}E\rightarrow
				E\rightarrow  \mathbf R i_{\ast} \; \mathbf L i^{\ast}E
				\rightarrow S^{1}\wedge \mathbf L j_{\sharp}\; \mathbf L j^{\ast}E
			\]
	\end{thm}
	\begin{proof}
		We refer the reader to \cite[\S 4.5.3]{MR2438151}.
	\end{proof}
	
	Consider the following fibred product diagram:
		\[	\xymatrix{X' \ar[d]_{l} \ar[r]^-{k}& X \ar[d]^-{g}\\
							Y' \ar[r]_-{h}& Y}
		\]
		
	\begin{prop}
			\label{prop.sharp.pfwd.comp}
		If $g$ is a proper map, and $h$ is an open immersion, then for every
		$E\in \stablehomotopyXprime$ there exists a canonical isomorphism
		 	\[	\mathbf R g_{\ast}\; \mathbf L k_{\sharp} \; E\rightarrow
				\mathbf L h_{\sharp} \; \mathbf R l_{\ast} \; E
			\]
		 in $\stablehomotopyY$.
	\end{prop}
	\begin{proof}
		We observe that $h$ and $k$ are open immersions.  Hence, by
		\cite[Scholium 1.4.2(3)]{MR2423375} there exist natural isomorphisms:
			\begin{align*}
				\mathbf L h_{\sharp}& \rightarrow  h_{!}\\
				\mathbf L k_{\sharp}& \rightarrow  k_{!}
			\end{align*}
		On the other hand $g$ and $l$ are proper maps.  Therefore, by
		\cite[Scholium 1.4.2(4)]{MR2423375} and \cite[Thm. 2.2.14(1)]{mixedmotives}
		there exist natural isomorphisms:
			\begin{align*}
				\mathbf R g_{\ast}& \rightarrow  g_{!}\\
				\mathbf R l_{\ast}& \rightarrow  l_{!}
			\end{align*}
		Thus, we deduce that there exist the following isomorphisms in $\stablehomotopyY$:
			\begin{align*}	\mathbf R g_{\ast}\; \mathbf L k_{\sharp} \; E & \cong g_{!}\; k_{!}\; E\\
									\mathbf L h_{\sharp} \; \mathbf R l_{\ast} \; E & \cong h_{!}\; l_{!}\; E
			\end{align*}
		Finally, by functoriality we conclude that
		$g_{!} k_{!} E$ and $h_{!} l_{!} E$ are isomorphic in $\stablehomotopyY$.
		This finishes the proof.
	\end{proof}
			
	\begin{lem}
			\label{lem.pullbackpreservesconnectivity}
		Let $q\in \mathbb Z$ be an arbitrary integer.  Then
			\[	\mathbf L g^{\ast}(\Sigma _{T}^{q}\stablehomotopyY^{\mathit{eff}})\subseteq \Sigma _{T}^{q}
		\stablehomotopyX^{\mathit{eff}}
		\]
		i.e.
		the functor $\mathbf L g^{\ast}:\stablehomotopyY \rightarrow \stablehomotopyX$ respects connective objects.
	\end{lem}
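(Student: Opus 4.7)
The plan is to exploit the fact that $\Sigma_T^q\stablehomotopyY^{eff}$ is \emph{by construction} the smallest triangulated subcategory of $\stablehomotopyY$ which contains the family $C^q_{eff}(Y)$ and is closed under arbitrary coproducts. Since $\mathbf L g^{\ast}$ is the left adjoint of $\mathbf R g_{\ast}$ in the six-operations formalism recalled in the introduction, it is exact and preserves arbitrary coproducts. Consequently the full subcategory
\[
\mathcal T \;=\; \{E\in\stablehomotopyY \mid \mathbf L g^{\ast}E \in \Sigma_T^q \stablehomotopyX^{eff}\}
\]
is a triangulated subcategory of $\stablehomotopyY$ closed under arbitrary coproducts. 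By the minimality of $\Sigma_T^q \stablehomotopyY^{eff}$ it then suffices to show that $C^q_{eff}(Y)\subseteq \mathcal T$.

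In other words, I must verify that for all $n,r,s\geq 0$ with $s-n\geq q$ and every $U\in Sm_Y$ the object $\mathbf L g^{\ast}(F_n(S^r\wedge \mathbb G_m^s\wedge U_+))$ lies in $\Sigma_T^q\stablehomotopyX^{eff}$. For this I would invoke the fact that $g^{\ast}:\mathcal M_Y\to\mathcal M_X$ is a strong symmetric monoidal left Quillen functor sending $T_Y\mapsto T_X$ and $U\mapsto U\times_Y X$ for $U\in Sm_Y$, which lifts to symmetric $T$-spectra and commutes with each free-spectrum functor $F_n$ up to natural isomorphism (by the universal property of $F_n$ as left adjoint to $ev_n$). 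Since the presheaves $S^r\wedge \mathbb G_m^s\wedge U_+$ are cofibrant, derived and underived pullback agree on them, so one obtains a canonical identification
\[
\mathbf L g^{\ast}\bigl(F_n(S^r\wedge \mathbb G_m^s\wedge U_+)\bigr) \;\cong\; F_n\bigl(S^r\wedge \mathbb G_m^s\wedge (U\times_Y X)_+\bigr).
\]
The right-hand side is visibly a member of $C^q_{eff}(X)$, since $s-n\geq q$ and $U\times_Y X\in Sm_X$, which closes the devissage.

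The only substantive technical point is the compatibility of $\mathbf L g^{\ast}$ with the free spectrum functors $F_n$ and with smash products against cofibrant simplicial presheaves; this is a standard consequence of $g^{\ast}$ being a strong symmetric monoidal left Quillen functor, but it does need to be invoked explicitly (and it is the step where one uses that the generators of $C^q_{eff}(Y)$ are built out of cofibrant data, so that no cofibrant replacement is required when passing to $\mathbf L g^{\ast}$). Once this is in hand the geometric content collapses to the elementary observation that the base change of a smooth $Y$-scheme along $g$ is smooth over $X$, and the argument is complete.
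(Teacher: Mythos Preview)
Your argument is correct and is precisely the natural unpacking of the paper's one-line proof, which reads in full: ``This follows directly from the fact that $g^{\ast}(T_{Y})=T_{X}$.'' You have spelled out the devissage (reduce to generators, use that $\mathbf L g^{\ast}$ is exact and coproduct-preserving, then compute on the $F_n$'s), whereas the paper leaves all of this implicit and records only the key geometric input; the approaches are essentially the same.
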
	
	\begin{proof}
		This follows directly from the fact that $g^{\ast}(T_{Y})=T_{X}$.
	\end{proof}

	It follows immediately from Lemma \ref{lem.pullbackpreservesconnectivity} that for
	any integer $q\in \mathbb Z$, there exists a pair of natural transformations
		\begin{align*}
			\alpha _{q} & : \mathbf L g^{\ast}\circ f_{q}\rightarrow f_{q}\circ \mathbf L g^{\ast}\\
			\beta _{q} & : \mathbf L g^{\ast}\circ s_{q}\rightarrow s_{q}\circ \mathbf L g^{\ast}
		\end{align*}
	such that for every $E\in \stablehomotopyY$ the following diagram
		\begin{equation}
				\label{equation.naturaltransformations}
			\begin{array}{c}
				\xymatrix{\mathbf L g^{\ast}(f_{q+1}E) \ar[d]^-{\alpha _{q+1}(E)} \ar[r]^-{\mathbf L g^{\ast}(\rho _{q}^{E})}& 
									\mathbf L g^{\ast}(f_{q}E) \ar[d]^-{\alpha _{q}(E)} \ar[r]^-{\mathbf L g^{\ast}(\pi _{q}^{E})}& 
									\mathbf L g^{\ast}(s_{q}E) \ar[d]_-{\beta _{q}(E)} \ar[r] & 
									S^{1}\wedge \mathbf L g^{\ast}(f_{q+1}E) 
									\ar[d]_-{id\wedge \alpha _{q+1}(E)}\\
									f_{q+1}(\mathbf L g^{\ast}E) \ar[r]_-{\rho _{q}^{\mathbf L g^{\ast}E}}& 
									f_{q}(\mathbf L g^{\ast}E) \ar[r]_-{\pi _{q}^{\mathbf L g^{\ast}E}}& s_{q}
									(\mathbf L g^{\ast}E) \ar[r]& S^{1}\wedge f_{q+1}(\mathbf L g^{\ast}E)}
			\end{array}
		\end{equation}
	is commutative and its rows are distinguished triangles in $\stablehomotopyX$.
	
	\begin{defi}
			\label{def.compatibility-with-pullbacks}
		We say that the slice filtration is \emph{compatible with pullbacks} along $g$,
		if $\beta _{q}$ is a natural isomorphism for every $q\in \mathbb Z$.
	\end{defi}
	
	\begin{lem}
			\label{lemma.pre.criterion}
		Let $E\in \stablehomotopyY$ be a symmetric $T_{Y}$-spectrum
		and $q\in \mathbb Z$.  Then the natural map:
		\[	\xymatrix{\alpha _{q}(f_{q}E): \mathbf L g^{\ast}(f_{q}f_{q}E) \ar[r] & 
							f_{q}(\mathbf L g^{\ast}(f_{q}E))}
		\]
		is an isomorphism in $\stablehomotopyX$.
	\end{lem}
	\begin{proof}
		By construction $\alpha _{q}(f_{q}E)$ fits in the following commutative diagram:
				\[	\xymatrix{\mathbf L g^{\ast}(f_{q}f_{q}E) \ar[d]_-{\alpha _{q}(f_{q}E)} 
									\ar[drr]^-{\mathbf L g^{\ast}(\theta ^{f_{q}E})} &&\\
									f_{q}(\mathbf L g^{\ast}f_{q}E) \ar[rr]_-{\theta ^{\mathbf L g^{\ast}f_{q}E}}
									&& \mathbf L g^{\ast}f_{q}E}
				\]
		where $\theta$ denotes the counit of the adjunction
			\[	(i_{q}, r_{q}):\Sigma _{T}^{q}\stablehomotopyX^{\mathit{eff}}\rightarrow \stablehomotopyX
			\]
		Thus, it suffices to show that $\mathbf L g^{\ast}(\theta ^{f_{q}E})$,
		$\theta ^{\mathbf L g^{\ast}f_{q}E}$ are isomorphisms in $\stablehomotopyX$.
			
		We observe that by construction $\theta ^{f_{q}E}$ is an isomorphism in $\stablehomotopyY$,
		hence $\mathbf L g^{\ast}(\theta ^{f_{q}E})$ is an isomorphism in $\stablehomotopyX$.
		Finally,
		it follows from Lemma \ref{lem.pullbackpreservesconnectivity} 
		that $\theta ^{\mathbf L g^{\ast}f_{q}E}$ is an isomorphism in $\stablehomotopyX$.
	\end{proof}	
		
	\begin{defi}
			\label{def.orthoganality}
		Let $E \in \stablehomotopyX$ be a symmetric $T_{X}$-spectrum and $q\in \mathbb Z$.
		We say that $E$ is \emph{$q$-orthogonal with respect to the slice filtration 
		in $\stablehomotopyX$}, if one of the following
		equivalent conditions holds:
			\begin{enumerate}
				\item	\label{orthogonality.1}  $f_{q}E=0$.
				\item	\label{orthogonality.2} $\Hom _{\stablehomotopyX}(F,E)=0$
								for every $F\in \Sigma _{T}^{q}\stablehomotopyX^{\mathit{eff}}$.
			\end{enumerate}			
		Let $\qorthogonalX$ denote the full subcategory of $\stablehomotopyX$
		consisting of the  
		symmetric $T_{X}$-spectra which are $q$-orthogonal
		with respect to the slice filtration in $\stablehomotopyX$.
	\end{defi}

	\begin{lem}	
			\label{lemma.orthognal-triangulated}
		$\qorthogonalX$ is a triangulated subcategory of $\stablehomotopyX$.
	\end{lem}
	\begin{proof}
		It follows immediately from the fact that the functor $\Hom _{\stablehomotopyX}(A,-)$
		is homological (see \cite[Def. 1.1.7]{MR1812507})
		for every $A\in \stablehomotopyX$.
	\end{proof}
			
	\begin{lem}	
			\label{lemma.pushforward-respects-orthogonal}
		The functor
		$\mathbf R g_{\ast}$ is compatible with the $q$-orthogonal objects with respect to the
		slice filtration, i.e.
			\[	\mathbf R g_{\ast}(\qorthogonalX)\subseteq \qorthogonalY
			\]
	\end{lem}
	\begin{proof}
		This follows directly from adjointness and Lemma \ref{lem.pullbackpreservesconnectivity}.
	\end{proof}
	
	\begin{lem}
			\label{lemma.criterion}
		Let $E\in \stablehomotopyY$ be a symmetric $T_{Y}$-spectrum
		and $q\in \mathbb Z$.
		If the following condition holds:
			\begin{equation}
					\label{equation.orthogonalitycondition}
				\mathbf L g^{\ast}(s_{q}E)\in \qplusoneorthogonalX
			\end{equation}	
		then the natural maps:
			\[	\xymatrix@R=0.5pt{\alpha _{q+1}(f_{q}E):\mathbf L g^{\ast}(f_{q+1}f_{q}E) \ar[r] & f_{q+1}
													(\mathbf L g^{\ast}(f_{q}E))  \\
				\beta _{q}(f_{q}E): \mathbf L g^{\ast}(s_{q}f_{q}E) \ar[r] & s_{q}(\mathbf L g^{\ast}(f_{q}E))}
			\]
		are isomorphisms in $\stablehomotopyX$.
	\end{lem}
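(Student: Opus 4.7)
The plan is to apply diagram (\ref{equation.naturaltransformations}) with $f_q E$ in place of $E$, producing two distinguished triangles linked by the maps $\alpha_{q+1}(f_q E)$, $\alpha_q(f_q E)$, $\beta_q(f_q E)$, and then to check these are isomorphisms in sequence. Two simplifications of the top row will be used repeatedly: $f_q f_q E \cong f_q E$ by idempotence of $f_q$, and $f_{q+1}(f_q E) \cong f_{q+1}E$, obtained by applying $f_{q+1}$ to the slice triangle of $E$ and using that $f_{q+1}(s_q E)=0$ since $s_q E$ is $(q+1)$-orthogonal in $\stablehomotopyY$.

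I would first dispose of $\alpha_q(f_q E)$. By Lemma \ref{lem.pullbackpreservesconnectivity} the object $\mathbf L g^{\ast}(f_q E)$ already lies in $\Sigma_T^q \stablehomotopyX^{eff}$, so $f_q(\mathbf L g^{\ast} f_q E) \cong \mathbf L g^{\ast} f_q E$; combined with $f_q f_q E \cong f_q E$, naturality of the counit $i_q r_q \Rightarrow \mathrm{id}$ identifies $\alpha_q(f_q E)$ with the identity map of $\mathbf L g^{\ast} f_q E$, so it is an isomorphism.

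The main work is at $\alpha_{q+1}(f_q E)$: its source $\mathbf L g^{\ast}(f_{q+1} f_q E) \cong \mathbf L g^{\ast}(f_{q+1} E)$ and target $f_{q+1}(\mathbf L g^{\ast} f_q E)$ both lie in $\Sigma_T^{q+1} \stablehomotopyX^{eff}$---the former by Lemma \ref{lem.pullbackpreservesconnectivity}, the latter by construction. The third terms of the two triangles are $\mathbf L g^{\ast}(s_q f_q E) \cong \mathbf L g^{\ast}(s_q E)$, which lies in $\qplusoneorthogonalX$ by hypothesis (\ref{equation.orthogonalitycondition}), and $s_q(\mathbf L g^{\ast} f_q E)$, which lies in $\qplusoneorthogonalX$ by the defining property of the slice. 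Because $\qplusoneorthogonalX$ is triangulated (Lemma \ref{lemma.orthognal-triangulated}), so do all their shifts. For any $K \in \Sigma_T^{q+1}\stablehomotopyX^{eff}$, applying the homological functor $\Hom_{\stablehomotopyX}(K,-)$ to both rows of (\ref{equation.naturaltransformations}) at $f_q E$ therefore turns the maps induced by $\rho_q$ into bijections on both rows; combined with Step~1 this forces $\alpha_{q+1}(f_q E)$ to induce a bijection $\Hom_{\stablehomotopyX}(K,-)$-wise for every $K \in \Sigma_T^{q+1}\stablehomotopyX^{eff}$. Since both endpoints of $\alpha_{q+1}(f_q E)$ already lie in this subcategory, Yoneda yields that $\alpha_{q+1}(f_q E)$ is an isomorphism.

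Finally, with $\alpha_q(f_q E)$ and $\alpha_{q+1}(f_q E)$ isomorphisms, the five lemma applied to the map of distinguished triangles in (\ref{equation.naturaltransformations}) at $f_q E$ shows $\beta_q(f_q E)$ is an isomorphism as well. The step I expect to be the main obstacle is the second: hypothesis (\ref{equation.orthogonalitycondition}) is used in an essential way to recognise the top row of (\ref{equation.naturaltransformations}) at $f_q E$ as a genuine slice-filtration triangle for $\mathbf L g^{\ast}(f_q E)$, so that the universal property of the $(q+1)$-connective cover can be invoked.
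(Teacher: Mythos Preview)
Your proof is correct and follows the same overall architecture as the paper: first show $\alpha_q(f_qE)$ is an isomorphism from Lemma~\ref{lem.pullbackpreservesconnectivity}, then deduce $\alpha_{q+1}(f_qE)$ is one, and finally conclude for $\beta_q(f_qE)$ from the triangle. The only difference is in the execution of the middle step. The paper invokes the octahedral axiom to produce an explicit cofibre $A$ of $\alpha_{q+1}(f_qE)$, observes that $\Sigma_{T_X}^{1,0}A$ lies simultaneously in $\Sigma_T^{q+1}\stablehomotopyX^{eff}$ and in $\qplusoneorthogonalX$, and concludes $A\cong 0$ from $\Hom(\Sigma_{T_X}^{1,0}A,\Sigma_{T_X}^{1,0}A)=0$. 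You instead apply $\Hom_{\stablehomotopyX}(K,-)$ for $K\in\Sigma_T^{q+1}\stablehomotopyX^{eff}$ directly to the two rows, use orthogonality of the third terms to see that the $\rho_q$-maps become bijections, and finish by Yoneda inside $\Sigma_T^{q+1}\stablehomotopyX^{eff}$. Your route is slightly more economical in that it avoids the octahedral axiom; the paper's route has the advantage of naming the obstruction object explicitly. Both are standard ways of expressing the same orthogonality-versus-connectivity trick.
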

	\begin{proof}
		Consider the commutative diagram (\ref{equation.naturaltransformations}) for $f_{q}E$:
				\[	\xymatrix@C=-0.1pc{\mathbf L g^{\ast}(f_{q+1}f_{q}E) \ar[d]_-{\alpha _{q+1}(f_{q}E)} 
					\ar[rrrrrrrr]^-{\mathbf L g^{\ast}(\rho _{q}^{f_{q}E})} &&&&&&&&
									\mathbf L g^{\ast}(f_{q}f_{q}E) \ar[d]_-{\alpha _{q}(f_{q}E)} 
									\ar[rrrrrrrr]^-{\mathbf L g^{\ast}(\pi _{q}^{f_{q}E})} &&&&&&&&
									\mathbf L g^{\ast}(s_{q}f_{q}E) \ar[d]^-{\beta _{q}(f_{q}E)} \ar[rrr] &&&
									S^{1}\wedge \mathbf L g^{\ast}(f_{q+1}f_{q}E) \ar[d] \\
									f_{q+1}(\mathbf L g^{\ast}f_{q}E) \ar[rrrrrrrr]_-{\rho _{q}^{\mathbf L 
									g^{\ast}f_{q}E}} &&&&&&&&
									f_{q}(\mathbf L g^{\ast}f_{q}E) 
									\ar[rrrrrrrr]_-{\pi _{q}^{\mathbf L g^{\ast}f_{q}E}} &&&&&&&&
									s_{q}(\mathbf L g^{\ast}f_{q}E) 
									\ar[rrr] &&&
									S^{1}\wedge f_{q+1}(\mathbf L g^{\ast}f_{q}E)}
				\]
		By Lemma \ref{lemma.pre.criterion},
		$\alpha _{q}(f_{q}E)$ is an isomorphism.  Using the octahedral axiom,
		we deduce that
		the following diagram commutes and all its rows and columns are
		distinguished triangles in $\stablehomotopyX$:
			\[	\xymatrix@C=-0.1pc{\mathbf L g^{\ast}(f_{q+1}f_{q}E) \ar[d]_-{\alpha _{q+1}(f_{q}E)} 
					\ar[rrrrrrrr]^-{\mathbf L g^{\ast}(\rho _{q}^{f_{q}E})} &&&&&&&& 
									\mathbf L g^{\ast}(f_{q}f_{q}E) \ar[d]_-{\alpha _{q}(f_{q}E)} 
									\ar[rrrrrrrr]^-{\mathbf L g^{\ast}(\pi _{q}^{f_{q}E})} &&&&&&&& 
									\mathbf L g^{\ast}(s_{q}f_{q}E) \ar[d]^-{\beta _{q}(f_{q}E)} \ar[rrr] &&&
									S^{1}\wedge \mathbf L g^{\ast}(f_{q+1}f_{q}E) \ar[d] \\
									f_{q+1}(\mathbf L g^{\ast}f_{q}E) \ar[rrrrrrrr]_-{\rho _{q}^{\mathbf L 
									g^{\ast}f_{q}E}} \ar[d] &&&&&&&& 
									f_{q}(\mathbf L g^{\ast}f_{q}E) \ar[rrrrrrrr]_-{\pi _{q}^{\mathbf L 
									g^{\ast}f_{q}E}} \ar[d]&&&&&&&& 
									s_{q}(\mathbf L g^{\ast}f_{q}E) \ar[rrr] \ar[d]&&& 
									S^{1}\wedge f_{q+1}(\mathbf L g^{\ast}f_{q}E) \ar[d]\\
									A \ar[rrrrrrrr] &&&&&&&& 
									0 \ar[rrrrrrrr] &&&&&&&& 
									S^{1}\wedge A \ar@{=}[rrr]&&&
									S^{1}\wedge A}
			\]
		Thus, it suffices to show that $S^{1}\wedge A \cong 0$ in $\stablehomotopyX$.
		It follows from Lemma \ref{lem.pullbackpreservesconnectivity} that $\mathbf L g^{\ast}(f_{q+1}f_{q}E)$
		is in $\Sigma _{T}^{q+1}\stablehomotopyX^{\mathit{eff}}$, and by construction
		$f_{q+1}(\mathbf L g^{\ast}f_{q}E)$ is also in $\Sigma _{T}^{q+1}\stablehomotopyX^{\mathit{eff}}$.
		Hence,
		$A$ and $S^{1}\wedge A$ are both
		in $\Sigma _{T}^{q+1}\stablehomotopyX^{\mathit{eff}}$.  
		
		On the other hand, by hypothesis 
		$\mathbf L g^{\ast}(s_{q}E)\cong \mathbf L g^{\ast}(s_{q}f_{q}E)$ is in $\qplusoneorthogonalX$;
		therefore, Lemma \ref{lemma.orthognal-triangulated} implies that 
		$S^{1}\wedge A$ is in $\qplusoneorthogonalX$, since
		$s_{q}(\mathbf L g^{\ast}f_{q}E)$ is in $\qplusoneorthogonalX$ by construction.
		
		Thus, we conclude that
			\[	\Hom _{\stablehomotopyX}(S^{1}\wedge A, S^{1}\wedge A)=0
			\]
		and from this it follows at once that
		$S^{1}\wedge A \cong 0$ in $\stablehomotopyX$, as we wanted.	
	\end{proof}
		
	\begin{thm}
			\label{thm.criterion}
		If the condition (\ref{equation.orthogonalitycondition}) in Lemma \ref{lemma.criterion}
		holds for every 
		symmetric $T_{Y}$-spectrum in
		$\stablehomotopyY$ and for every integer $\ell \in \mathbb Z$, then
		the slice filtration is compatible with pullbacks along $g$, i.e.
		there exists a natural isomorphism 
			\[	\beta _{\ell}:\mathbf L g^{\ast}\circ s_{\ell}\rightarrow s_{\ell}\circ \mathbf L g^{\ast}
			\]
		for every $\ell \in \mathbb Z$.
	\end{thm}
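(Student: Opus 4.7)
The plan is to fix $E\in \stablehomotopyY$ and $\ell \in \mathbb{Z}$ and prove that $\beta_{\ell}(E)$ is an isomorphism in three steps, the first reducing to Lemma~\ref{lemma.criterion} and the remaining two resolving an orthogonality/convergence issue.  First I would compare $\beta_{\ell}(E)$ with $\beta_{\ell}(f_{\ell}E)$ through the naturality square induced by the canonical map $f_{\ell}E \to E$.  The left vertical map $\mathbf{L}g^{\ast}(s_{\ell}f_{\ell}E) \to \mathbf{L}g^{\ast}(s_{\ell}E)$ is automatically an isomorphism because $s_{\ell}f_{\ell}E \cong s_{\ell}E$, and the top horizontal $\beta_{\ell}(f_{\ell}E)$ is an isomorphism by Lemma~\ref{lemma.criterion} applied to $E$ at level $\ell$ (whose hypothesis is supplied by the standing assumption of the theorem).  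Thus it suffices to show that the right vertical, $s_{\ell}(\mathbf{L}g^{\ast}f_{\ell}E) \to s_{\ell}(\mathbf{L}g^{\ast}E)$, is an isomorphism.

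For the second step I would reduce this to an orthogonality statement.  Let $c$ denote the cofiber of $f_{\ell}E \to E$ in $\stablehomotopyY$, so that $c$ lies in $\stablehomotopyY^{\perp}(\ell)$.  Applying the exact functor $s_{\ell}$ to the triangle $\mathbf{L}g^{\ast}f_{\ell}E \to \mathbf{L}g^{\ast}E \to \mathbf{L}g^{\ast}c$ shows the right vertical is an isomorphism precisely when $s_{\ell}(\mathbf{L}g^{\ast}c)=0$, and for this it is enough that $\mathbf{L}g^{\ast}c \in \stablehomotopyX^{\perp}(\ell)$.  To establish this I would write $c$ as a sequential homotopy colimit of finite truncations: iterated application of the octahedral axiom to $f_{\ell}E \to f_{\ell-n}E \to E$ produces distinguished triangles $s_{\ell-n}E \to f_{\ell-n}E/f_{\ell}E \to f_{\ell-n+1}E/f_{\ell}E$, so each $f_{\ell-n}E/f_{\ell}E$ lies in the triangulated subcategory generated by $s_{\ell-1}E,\ldots, s_{\ell-n}E$, and $c \cong \mathrm{hocolim}_{n}\,f_{\ell-n}E/f_{\ell}E$ by exhaustivity of the slice tower.

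The third step invokes the standing hypothesis.  For every $j\leq \ell-1$ the hypothesis gives $\mathbf{L}g^{\ast}(s_{j}E) \in \stablehomotopyX^{\perp}(j+1)$, which is contained in $\stablehomotopyX^{\perp}(\ell)$:  indeed $\stablehomotopyX^{\perp}(q) \subseteq \stablehomotopyX^{\perp}(q')$ for $q'\geq q$ since $f_{q}A = 0$ implies $f_{q'}A \cong f_{q'}f_{q}A = 0$.  Lemma~\ref{lemma.orthognal-triangulated} and the compact generation of $\Sigma_{T}^{\ell}\stablehomotopyX^{eff}$ together imply that $\stablehomotopyX^{\perp}(\ell)$ is a triangulated subcategory closed under arbitrary coproducts, hence under sequential homotopy colimits.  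Since $\mathbf{L}g^{\ast}$ is a left adjoint it commutes with homotopy colimits, so $\mathbf{L}g^{\ast}c$ lands in $\stablehomotopyX^{\perp}(\ell)$, completing the argument.

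The hard part will be the exhaustivity identification $c \cong \mathrm{hocolim}_{n}\,f_{\ell-n}E/f_{\ell}E$ used in the second step.  This does not follow from the formal triangulated structure of $\stablehomotopyY$ but is a genuine convergence property of the slice filtration; I expect it to require a separate argument, likely by testing the comparison map against the compact generators $F_{n}(S^{r}\wedge \mathbb{G}_{m}^{s}\wedge U_{+})$ of $\Sigma_{T}^{\ell}\stablehomotopyY^{eff}$ to reduce to a statement about ordinary mapping spectra, or an appeal to the model-categorical presentation providing direct control over the infinite tower.
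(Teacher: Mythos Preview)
Your argument is correct, and it reaches the conclusion by a route that is genuinely different from the paper's.  The paper writes $E \cong \mathrm{hocolim}_{p\leq \ell}\, f_{p}E$, uses that both $\mathbf{L}g^{\ast}$ and $s_{\ell}$ commute with filtered homotopy colimits to reduce to showing each $\beta_{\ell}(f_{p}E)$ is an isomorphism, and then runs a downward induction on $p$: the base case $p=\ell$ is Lemma~\ref{lemma.criterion}, and the inductive step from $r$ to $r-1$ is handled by a second application of Lemma~\ref{lemma.criterion} (at level $r-1$) to identify $s_{\ell}(\alpha_{r}(f_{r-1}E))$ as an isomorphism.  You instead invoke Lemma~\ref{lemma.criterion} only once (at level $\ell$), and then attack the cofiber $c$ of $f_{\ell}E\to E$ directly by building it out of slices and pushing it into $\stablehomotopyX^{\perp}(\ell)$ using the orthogonality hypothesis at all lower levels.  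Your approach is arguably more transparent about \emph{why} the hypothesis at all levels is needed, while the paper's induction packages the bookkeeping more neatly via the $\alpha$-maps; the underlying inputs (exhaustivity, the orthogonality hypothesis, closure of $\stablehomotopyX^{\perp}(\ell)$ under colimits) are the same.

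One minor comment: the exhaustivity you flag as the ``hard part'' is in fact routine and the paper uses the equivalent statement $E\cong \mathrm{hocolim}_{p\leq \ell}\,f_{p}E$ without comment.  It follows immediately from compact generation: for any generator $K=F_{n}(S^{r}\wedge \mathbb{G}_{m}^{s}\wedge U_{+})$ one has $K\in \Sigma_{T}^{s-n}\stablehomotopyY^{eff}$, so $\Hom(K,f_{p}E)\to \Hom(K,E)$ is already an isomorphism once $p\leq s-n$, and compactness of $K$ lets you pass to the colimit.  Your proposed strategy of testing against compact generators is exactly right, and no model-categorical input is needed.
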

	\begin{proof}
		Let $E$ be a symmetric $T_{Y}$-spectrum in $\stablehomotopyY$ and fix
		an integer $q\in \mathbb Z$.  Then 
		$E\cong \mathrm{hocolim} _{p\leq q}f_{p}E$, and since $\mathbf L g^{\ast}$
		and $s_{q}$ commute with filtered homotopy colimits we deduce that
		$\beta _{q}(E):\mathbf L g^{\ast}(s_{q}E)\rightarrow s_{q}(\mathbf L g^{\ast}E)$ is
		given by $\mathrm{hocolim} _{p\leq q}\beta _{q}(f_{p}E)$.  Hence, it suffices to show
		that $\beta _{q}(f_{p}E):\mathbf L g^{\ast}(s_{q}(f_{p}E)) \rightarrow s_{q}\mathbf L g^{\ast}(f_{p}E)$
		is an isomorphism in $\stablehomotopyX$ for every integer $p\leq q$.
			
		Lemma \ref{lemma.criterion} implies that $\beta _{q}(f_{q}E)$ is an isomorphism.
		We now proceed by induction, and assume that $\beta _{q}(f_{r}E)$ is
		an isomorphism for some $r\leq q$.  It only remains to show that in this situation,
		$\beta _{q}(f_{r-1}E)$ is also an isomorphism.  Consider the following commutative
		diagram in $\stablehomotopyX$:
			\[	\xymatrix{\mathbf L g^{\ast}(s_{q}(f_{r}E)) \ar[rr]^-{\beta _{q}(f_{r}E)} \ar[d]_-{\mathbf L 
						g^{\ast}s_{q}(\rho _{r-1}^{E})}&&
						s_{q}(\mathbf L g^{\ast}(f_{r}E)) \ar[d]^-{s_{q}\mathbf L g^{\ast}(\rho _{r-1}^{E})}\\
						\mathbf L g^{\ast}(s_{q}(f_{r-1}E)) \ar[rr]_-{\beta _{q}(f_{r-1}E)}&& s_{q}(\mathbf L 
						g^{\ast}(f_{r-1}E))}
			\]
		Since $r\leq q$, the left vertical map is an isomorphism and our induction hypothesis
		says that $\beta _{q}(f_{r}E)$ is also an isomorphism.  Thus, it is enough to check
		that $s_{q}\mathbf L g^{\ast}(\rho _{r-1}^{E})$ is an isomorphism in $\stablehomotopyX$.  Now,
		we observe that the following diagram in $\stablehomotopyX$ commutes:
			\[	\xymatrix{s_{q}(\mathbf L g^{\ast}(f_{r}E)) \ar[r]^-{\cong} \ar[d]_-{s_{q}\mathbf L g^{\ast}
						(\rho _{r-1}^{E})}&  
						s_{q}(\mathbf L g^{\ast}(f_{r}f_{r-1}E)) \ar[d]^-{s_{q}(\alpha _{r}(f_{r-1}E))}\\
						s_{q}(\mathbf L g^{\ast}(f_{r-1}E)) \ar[r]_-{\cong}& s_{q}(f_{r}(\mathbf L g^{\ast}
						(f_{r-1}E)))}
			\]
		where the rows are both canonical isomorphisms and the right vertical map
		is also an isomorphism by Lemma \ref{lemma.criterion}.  Thus, 
		we conclude that $s_{q}\mathbf L g^{\ast}(\rho _{r-1}^{E})$
		is an isomorphism in $\stablehomotopyX$.  This finishes the proof.		
	\end{proof}
	
	\begin{rmk}
			\label{rmk.axioms}
		It is clear that Theorem \ref{thm.criterion}
		holds for any triangulated functor
			\[	F:\stablehomotopyY \rightarrow \stablehomotopyX
			\]
		which satisfies the following axioms:
		\begin{enumerate}
		 \item \label{rmk.axioms1}For every 
			$q\in \mathbb Z$, $F(\Sigma _{T}^{q}\stablehomotopyY^{\mathit{eff}})\subseteq
			\Sigma _{T}^{q}\stablehomotopyX^{\mathit{eff}}$.
		 \item \label{rmk.axioms2}$F$ commutes with filtered homotopy colimits.
		\end{enumerate}
		Interesting examples are the following:
		\begin{enumerate}
		 \item \label{rmk.axioms3}$A\wedge ^{\mathbf L}-:\stablehomotopyX \rightarrow \stablehomotopyX$,
			where $A$ is a symmetric $T_{X}$-spectrum in $\stablehomotopyX^{eff}$.
		 \item \label{rmk.axioms4}$\mathbf{L}g_{\sharp}:\stablehomotopyX \rightarrow \stablehomotopyY$,
			where $g:X\rightarrow Y$ is a smooth map of finite type.			 
		\end{enumerate}
	\end{rmk}
	
\begin{rmk}
		\label{remark.slices-MGLmodules}
	For the applications in this paper, we will not need the full force of Theorem \ref{thm.criterion}
	since we will prove a stronger statement, i.e. that the condition (\ref{equation.orthogonalitycondition})
	holds for every symmetric $T$-spectrum $E$ in $\qplusoneorthogonalX$.
	However, Theorem \ref{thm.criterion} is still interesting, since the slices have much more structure 
	and nicer properties, for instance they are always modules in $Spt(\mathcal M _{X})$ over Voevodsky's
	algebraic cobordism spectrum $MGL$ (see \cite{oriensf}).
	We refer the reader to \cite{SKthesis}
	for some interesting applications of Theorem \ref{thm.criterion}.
\end{rmk}	
	
	\begin{prop}
			\label{prop.smoothnes=orth.comp}
		Assume that $g:X\rightarrow Y$ is a smooth map of finite type.  Let
		$q\in \mathbb Z$ be an arbitrary integer, and  $E\in \qorthogonalY$
		an arbitrary symmetric $T_{Y}$-spectrum.  Then
			\[	\mathbf L g^{\ast}E\in \qorthogonalX
			\]
	\end{prop}	
	\begin{proof}
		Since $g$ is smooth, the functor $\mathbf L g^{\ast}$ admits a
		left adjoint $\mathbf L g_{\sharp}$.  Then, the result follows
		immediately from adjointness.
	\end{proof}
	\begin{cor}
			\label{cor.smoothnes=>compatibility}
		Assume that $g:X\rightarrow Y$ is a smooth map of finite type.  Then
		for every symmetric $T_{Y}$-spectrum in
		$\stablehomotopyY$ and for every integer $\ell \in \mathbb Z$,
		the condition (\ref{equation.orthogonalitycondition}) in Lemma \ref{lemma.criterion} holds;
		and as a consequence the slice filtration is compatible
		with pullbacks along $g$ in the sense of Definition \ref{def.compatibility-with-pullbacks}.
	\end{cor}
	\begin{proof}
		Consider a symmetric $T_{Y}$-spectrum $E$ in $\stablehomotopyY$ and fix
		an integer $q\in \mathbb Z$.  By construction, $s_{q}E\in 
		\qplusoneorthogonalY$.  Thus the result follows directly from
		Proposition \ref{prop.smoothnes=orth.comp} and Theorem \ref{thm.criterion}.
	\end{proof}
	
\end{section}
\begin{section}{The case of schemes defined over a field with resolution of singularities}
		\label{sect-2}
		
	In this section $k$ will denote a  field with resolution of singularities and
	$X$ will be a separated $k$-scheme of finite type with structure map
	$g:X\rightarrow \mathrm{Spec}\; k$.  
	Our goal is to
	show that the condition (\ref{equation.orthogonalitycondition}) of
	Lemma \ref{lemma.criterion} holds for every symmetric $T_{k}$-spectrum in $\stablehomotopyk$
	and
	for every integer $q\in \mathbb Z$.  Thus, 
	by Theorem \ref{thm.criterion} we conclude
	that in this situation there exists compatibility between
	the slice filtration and pullback along $g$
	in the sense of Definition \ref{def.compatibility-with-pullbacks}.
	
\begin{defi}
		\label{def.res.sings}
	We will say that a  field $k$ admits resolution of singularities if the following
	condition holds:
		\begin{description}
			\item[RS] For any separated $k$-scheme of finite type $X$, there exists a proper
								and birational morphism $p:\tilde{X}\rightarrow X$ such that
								$\tilde{X}$ is smooth over $k$.
		\end{description}
\end{defi}

\begin{rmk}
		\label{rmk.perfect.field}
	Notice that if a field $k$ admits resolution of singularities, then in particular it
	is a perfect field.
\end{rmk}
	
	\begin{prop}
			\label{prop.compatibility-under-res-sing}
		Let $E$ be an arbitrary symmetric $T_{k}$-spectrum in $\stablehomotopyk$
		and $q\in \mathbb Z$ an arbitrary integer.  Then
			$$\mathbf L g^{\ast}(s_{q}E) \in \qplusoneorthogonalX
			$$
	\end{prop}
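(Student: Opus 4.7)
My plan is to establish the proposition by Noetherian induction on $\dim X$, using resolution of singularities (implicit in the hypothesis on $k$) together with proper cdh descent in $\stablehomotopyX$ to reduce the general possibly singular case to the smooth one, where Lemma \ref{lemma.smoothnes=>compatibility} applies directly.

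Before starting the induction I would handle nilpotents: the localization triangle for the closed immersion $i_{\mathrm{red}}:X_{\mathrm{red}}\hookrightarrow X$ (whose open complement is empty) gives $\mathbf{L}g^{*}(s_{q}E)\cong \mathbf{R}(i_{\mathrm{red}})_{*}\mathbf{L}(g\circ i_{\mathrm{red}})^{*}(s_{q}E)$, so by Lemma \ref{lemma.pushforward-respects-orthogonal} it suffices to treat reduced $X$. The base case $\dim X=0$ is then immediate: a reduced zero-dimensional separated $k$-scheme of finite type is a finite disjoint union of spectra of finite separable extensions of $k$ (resolution of singularities forces $k$ to be, in particular, perfect), each smooth over $k$, so Lemma \ref{lemma.smoothnes=>compatibility} applies.

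For the inductive step, assume the proposition for all separated $k$-schemes of finite type of dimension strictly less than $d$, and let $X$ be reduced of dimension $d>0$. If $X$ is smooth over $k$, Lemma \ref{lemma.smoothnes=>compatibility} concludes. Otherwise, resolution of singularities furnishes a proper birational morphism $p:\tilde X\to X$ with $\tilde X$ smooth over $k$, together with a closed subscheme $i:Z\hookrightarrow X$ containing the singular locus, with $\dim Z<d$, such that $p$ restricts to an isomorphism over $X\setminus Z$. Writing $j':E':=p^{-1}(Z)\hookrightarrow\tilde X$ and $p'=p|_{E'}:E'\to Z$, one has $\dim E'<d$. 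Proper cdh descent in $\stablehomotopyX$, available under resolution of singularities via the six-functor formalism of \cite{MR2438151} and \cite{mixedmotives}, then furnishes a distinguished triangle
$$\mathbf L g^{*}(s_{q} E) \to \mathbf R p_{*}\mathbf L \tilde g^{*}(s_{q} E) \oplus \mathbf R i_{*}\mathbf L g_{Z}^{*}(s_{q} E) \to \mathbf R (ip')_{*}\mathbf L g_{E'}^{*}(s_{q} E) \to \Sigma^{1,0}_{T_{X}}\mathbf L g^{*}(s_{q} E)$$
with $\tilde g=g\circ p$, $g_{Z}=g\circ i$, and $g_{E'}=g\circ i\circ p'$. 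Lemma \ref{lemma.smoothnes=>compatibility} applied to the smooth $\tilde g$ gives $\mathbf L\tilde g^{*}(s_{q}E)\in \mathcal{SH}_{\tilde X}^{\perp}(q+1)$, while the inductive hypothesis on the strictly lower-dimensional $Z$ and $E'$ gives $\mathbf L g_{Z}^{*}(s_{q}E)\in \mathcal{SH}_{Z}^{\perp}(q+1)$ and $\mathbf L g_{E'}^{*}(s_{q}E)\in \mathcal{SH}_{E'}^{\perp}(q+1)$. Lemma \ref{lemma.pushforward-respects-orthogonal} then places all three pushforwards in $\qplusoneorthogonalX$, and since this subcategory is triangulated by Lemma \ref{lemma.orthognal-triangulated}, the two-out-of-three property applied to the triangle forces $\mathbf L g^{*}(s_{q}E)\in\qplusoneorthogonalX$, closing the induction.

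The main obstacle will be justifying the proper cdh descent distinguished triangle in $\stablehomotopyX$: its existence is a nontrivial input from the six-functor formalism, requiring proper base change together with the resolution-of-singularities hypothesis. Granting that (and the auxiliary localization reduction to reduced schemes), the remainder is an essentially formal combination of Lemmas \ref{lemma.orthognal-triangulated}, \ref{lemma.pushforward-respects-orthogonal}, and \ref{lemma.smoothnes=>compatibility}.
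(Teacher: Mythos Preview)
Your proof is correct and follows the same overall architecture as the paper's: Noetherian induction combined with resolution of singularities to reduce to the smooth case handled by Lemma~\ref{lemma.smoothnes=>compatibility}, using Lemmas~\ref{lemma.orthognal-triangulated} and~\ref{lemma.pushforward-respects-orthogonal} to propagate orthogonality through pushforwards and triangles.

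The difference lies in the descent input. You invoke the abstract blow-up (proper cdh) distinguished triangle in $\stablehomotopyX$ as a single packaged result, which immediately places $\mathbf{L}g^{*}(s_{q}E)$ in a triangle whose other two vertices are orthogonal by induction and Lemma~\ref{lemma.smoothnes=>compatibility}. The paper instead works more by hand: it applies the localization triangle on the resolution $W$, uses it together with the smoothness of $W$ and induction on $p^{-1}Y$ to show $\tilde{\jmath}_{!}\tilde{\jmath}^{!}\mathbf{L}(g\circ p)^{*}(s_{q}E)$ is orthogonal, then pushes this down along the projective $p$ and unwinds a chain of six-functor identities (proper base change, $p_{!}\cong\mathbf{R}p_{*}$, $j_{!}j^{!}\cong\mathbf{L}j_{\sharp}\mathbf{L}j^{*}$) to identify the result with $j_{!}j^{!}\mathbf{L}g^{*}(s_{q}E)$; a second localization triangle on $X$ and induction on $Y$ then finish. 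In effect the paper rederives, in this specific instance, the content of the cdh triangle you quote. Your route is shorter and conceptually cleaner, at the cost of importing a stronger black box; the paper's route is more self-contained within Ayoub's formalism. Your explicit reduction to reduced schemes and treatment of the zero-dimensional base case are also more careful than the paper, which leaves these implicit in the phrase ``Noetherian induction.''
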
	
	\begin{proof}
		By Theorem \ref{thm.localization} we can assume that $X$ is a reduced scheme.
		If $X$ is smooth over $k$, then the result follows from 
		Corollary \ref{cor.smoothnes=>compatibility}.
		In the general case, we will proceed by induction on the dimension of $X$. 
		
		If $\mathrm{dim}\;X=0$, then $X$ is smooth since $k$ is in particular
		a perfect field (and $X$ is reduced), hence the result holds.  If $\mathrm{dim}\;X>0$, then
		there exist the following fibre product diagrams,
		since our base field has resolution of singularities:
		   $$\xymatrix{p^{-1}Y \ar[r]^-{\tilde{\imath}} \ar[d]_-{\tilde{p}}& W \ar[d]^-{p} &
				p^{-1}U \ar[r]^-{\tilde{\jmath}} \ar[d]_-{h}^-{\cong} & W \ar[d]^-{p}\\
				Y \ar[r]_-{i} & X
				& U=X\backslash Y \ar[r]_-{j}& X}
		   $$
		where $Y$ is a nowhere dense closed subscheme of $X$, $p$ is proper,
		dominant and birational, W is smooth over $k$ (with structure map $g\circ p$) 
		and $h$ is an isomorphism.
		
		To simplify the notation, let $F$ be $\mathbf L (g\circ p)^{\ast}(s_{q}E)$.
		By Theorem \ref{thm.localization}, the following diagram is
		a distinguished triangle in
		$\stablehomotopyW$:
			\[	\mathbf L \tilde{\jmath}_{\sharp} \; \mathbf L \tilde{\jmath}^{\ast}(F) 
				    \rightarrow F \rightarrow 
				    \mathbf R \tilde{\imath}_{\ast} \; \mathbf L \tilde{\imath}^{\ast}(F) \rightarrow
				    S^{1}\wedge  \mathbf L \tilde{\jmath}_{\sharp}\; \mathbf L \tilde{\jmath}^{\ast}(F)
			\]
		Now, Corollary \ref{cor.smoothnes=>compatibility} 
		implies that $F=\mathbf L (g\circ p)^{\ast}(s_{q}E)$
		is in $\qplusoneorthogonalW$, since $g\circ p:W\rightarrow k$
		is a smooth map of finite type.  By induction on the dimension
		($\mathrm{dim}\; p^{-1}Y<\mathrm{dim}\; X$), we deduce that
		$\mathbf L \tilde{\imath}^{\ast}(F) \cong 
		\mathbf L (g \circ p \circ \tilde{\imath})^{\ast}(s_{q}E)$ is in
		$\qplusoneorthogonalinvY$, thus Lemma \ref{lemma.pushforward-respects-orthogonal}
		implies that $\mathbf R \tilde{\imath}_{\ast} \;
		\mathbf L \tilde{\imath}^{\ast}(F)$ is in $\qplusoneorthogonalW$.
		Therefore, it follows from Lemma \ref{lemma.orthognal-triangulated} that
		$\mathbf L \tilde{\jmath}_{\sharp} \; \mathbf L \tilde{\jmath}^{\ast}(F)$ 
		is also in $\qplusoneorthogonalW$.

		By Lemma \ref{lemma.pushforward-respects-orthogonal} we
		conclude that 
			\[ \mathbf R p_{\ast}\; 
				\mathbf L \tilde{\jmath}_{\sharp} \; \mathbf L \tilde{\jmath}^{\ast}(F) 
				\cong  \mathbf R p_{\ast}\; 
				\mathbf L \tilde{\jmath}_{\sharp} \; \mathbf L \tilde{\jmath}^{\ast}
				\; \mathbf L p^{\ast}(\mathbf L g^{\ast}s_{q}E)
			\]
		is in $\qplusoneorthogonalX$.  On the other hand, we claim 
		the existence of the following natural isomorphisms 
		in $\stablehomotopyX$:
			\begin{eqnarray}
			 	\mathbf R p_{\ast}\; 
				\mathbf L \tilde{\jmath}_{\sharp} \; (\mathbf L \tilde{\jmath}^{\ast}
				\; \mathbf L p^{\ast})\;\mathbf L g^{\ast}s_{q}E& \cong &
			\mathbf R p_{\ast}\; 
				\mathbf L \tilde{\jmath}_{\sharp} \; (\mathbf L h^{\ast}
				\; \mathbf L j^{\ast})\;\mathbf L g^{\ast}s_{q}E
			\label{iso1}\\
			&\cong &  (\mathbf L j_{\sharp}\; \mathbf R h_{\ast}) 
			\; \mathbf L h^{\ast}
				\; \mathbf L j^{\ast}\;\mathbf L g^{\ast}s_{q}E 
			\label{iso2}\\
			&\cong & \mathbf L j_{\sharp}
				\; \mathbf L j^{\ast}(\mathbf L g^{\ast}s_{q}E) 
			\label{iso3}
			\end{eqnarray}
		In effect;  (\ref{iso1}) follows from functoriality,
		(\ref{iso2}) follows from Proposition \ref{prop.sharp.pfwd.comp} and
		(\ref{iso3}) follows from the fact that $h$ is an isomorphism.
		Therefore, we conclude that $\mathbf L j_{\sharp}\; 
		\mathbf L j^{\ast}\; \mathbf L g^{\ast}s_{q}E$ 
		is in $\qplusoneorthogonalX$.  
		
		On the other hand, by induction
		on the dimension ($\mathrm{dim}\; Y<\mathrm{dim}\; X$),
		we can assume that $\mathbf L i^{\ast} (\mathbf L g^{\ast}s_{q}E)$ 
		is in $\qplusoneorthogonalY$, and using Lemma \ref{lemma.pushforward-respects-orthogonal}
		we deduce that 
		$\mathbf R i_{\ast} \; \mathbf L i^{\ast} (\mathbf L g^{\ast}s_{q}E)$
		is in $\qplusoneorthogonalX$.
	
		Finally, by Theorem \ref{thm.localization} 
		the following diagram is
		a distinguished triangle in
		$\stablehomotopyX$:
			\[	\mathbf L j_{\sharp}\; \mathbf L j^{\ast}(\mathbf L g^{\ast}s_{q}E) 
				    \rightarrow \mathbf L g^{\ast}s_{q}E \rightarrow 
				    \mathbf R i_{\ast} \; \mathbf L i^{\ast}(\mathbf L g^{\ast}s_{q}E) 
				    \rightarrow
				    S^{1}\wedge  \mathbf L j_{\sharp}\; \mathbf L j^{\ast}(\mathbf L g^{\ast}s_{q}E)
			\]
		Hence,
		Lemma \ref{lemma.orthognal-triangulated}  implies that
		$\mathbf L g^{\ast}(s_{q}E)$ is in $\qplusoneorthogonalX$,
		as we wanted.
	\end{proof}
		
	\begin{thm}
	 	   \label{thm.compatibility-with-res-sing}
		Let
		$X$ be a separated $k$-scheme of finite type with structure map
		$g:X\rightarrow k$, where $k$ has resolution of singularities.
		Then the slice filtration is compatible with pullbacks along $g$
		in the sense of Definition \ref{def.compatibility-with-pullbacks}.
	\end{thm}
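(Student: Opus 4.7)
The plan is to obtain this theorem immediately by combining the abstract criterion of Theorem \ref{thm.criterion} with the orthogonality verification carried out in Proposition \ref{prop.compatibility-under-res-sing}, applied with $Y=k$ and with the structure map $g:X\rightarrow k$ in the role of the map along which we want compatibility.

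First, I would unpack what Theorem \ref{thm.criterion} demands: a natural isomorphism $\beta_{\ell}:\mathbf L g^{\ast}\circ s_{\ell}\rightarrow s_{\ell}\circ \mathbf L g^{\ast}$ for every $\ell\in \mathbb Z$ is guaranteed provided the orthogonality condition $(\ref{equation.orthogonalitycondition})$ holds for every symmetric $T_{k}$-spectrum in $\stablehomotopyk$ and every integer $q\in \mathbb Z$. Second, I would note that Proposition \ref{prop.compatibility-under-res-sing} is exactly the statement that, under the standing hypothesis that $k$ admits resolution of singularities, $\mathbf L g^{\ast}(s_{q}E)\in \qplusoneorthogonalX$ for every such $E$ and every such $q$. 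Thus the hypothesis of Theorem \ref{thm.criterion} is satisfied in this setting, and invoking that theorem produces the required natural isomorphism, which is precisely the content of Definition \ref{def.compatibility-with-pullbacks}.

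There is no remaining obstacle at the level of the present statement: the substantive geometric and categorical input, namely the Noetherian induction on closed subschemes of $X$, the appeal to resolution of singularities to produce the birational cover $p:W\rightarrow X$ with $W$ smooth over $k$, the manipulation of the localization triangle associated to the pair $(\tilde \imath,\tilde \jmath)$, and the use of the projective base change and functoriality isomorphisms from the six-functor formalism, has already been absorbed into the proof of Proposition \ref{prop.compatibility-under-res-sing}. The theorem itself is simply the packaging of that verification through the criterion of Theorem \ref{thm.criterion}, so the proof reduces to citing these two earlier results.
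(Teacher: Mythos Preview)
Your proposal is correct and matches the paper's own proof, which simply invokes Theorem~\ref{thm.criterion} together with Proposition~\ref{prop.compatibility-under-res-sing}. The additional commentary you give about what goes into Proposition~\ref{prop.compatibility-under-res-sing} is accurate but not needed for the proof of this theorem itself.
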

	\begin{proof}
		It follows directly from Theorem \ref{thm.criterion} together with 
		Proposition \ref{prop.compatibility-under-res-sing}.
	\end{proof}

	\begin{cor}
			\label{cor.pullback.always.comp}
		Let $E\in \stablehomotopyk$ be an arbitrary symmetric $T_{k}$-spectrum
		and $q\in \mathbb Z$ an arbitrary integer. Let
		$h:X\rightarrow Y$ be a map of separated
		$k$-schemes of finite type, with structure
		maps $u$, $v$ respectively.
		Then, there exists a
		canonical isomorphism in $\stablehomotopyX$:
			\[	\beta _{q}(\mathbf L v^{\ast}E):
				\mathbf L h^{\ast}(s_{q}\mathbf L v^{\ast} E)\rightarrow
					s_{q}(\mathbf L h^{\ast}\; \mathbf L v^{\ast}E)
					\cong s_{q}(\mathbf L u^{\ast} E)
			\]
	\end{cor}
	\begin{proof}
		By Theorem \ref{thm.compatibility-with-res-sing},
			\begin{align*}	
				\beta _{q}^{Y}(E): & \mathbf L v^{\ast} s_{q}E\rightarrow s_{q}
				\mathbf L v^{\ast}E \\
				\beta _{q}^{X}(E): & \mathbf L u^{\ast} s_{q}E
				\cong \mathbf L h^{\ast}\; \mathbf L v^{\ast}s_{q}E\rightarrow 
				s_{q} \mathbf L u^{\ast} E
			\end{align*}
		are isomorphisms in $\stablehomotopyY$ and $\stablehomotopyX$
		respectively.  Thus, we deduce that
		$\mathbf L h^{\ast} (\beta _{q}^{Y}(E))$
		is an isomorphism in $\stablehomotopyX$.  Finally, we observe
		that the following diagram in $\stablehomotopyX$ commutes
			\[	\xymatrix{\mathbf L h^{\ast}(s_{q}\mathbf L v^{\ast} E) 
					\ar[rr]^-{\beta _{q}(\mathbf L v^{\ast}E)}&&
									s_{q}(\mathbf L h^{\ast}\; \mathbf L v^{\ast}E)
					\cong s_{q}(\mathbf L u^{\ast} E)\\
					 \mathbf L u^{\ast} s_{q}E
				\cong \mathbf L h^{\ast}\; \mathbf L v^{\ast}s_{q}E
				\ar[u]^-{\mathbf L h^{\ast} (\beta _{q}^{Y}(E))} \ar[urr]_-{\beta _{q}^{X}(E)}
				&&
					}
			\]
		Hence the result follows.
	\end{proof}
	
\end{section}
\begin{section}{Applications}
		\label{sect-3}

	In this section we assume that all our schemes are of finite type
	over a field $k$ of characteristic zero.

	\begin{defi}
	 	   \label{def.notation}
	 	Let 
		$\mathbf 1 _{X}$, $\mathbf{KH} _{X}$, $\mathbf{HZ} _{X}$,
		$\mathbf{HZ} _{X}^{\slicefilt}  \in Spt(\mathcal M _{X})$ denote
		respectively
		the sphere spectrum,  the spectrum
		representing Weibel's homotopy invariant $K$-theory
		\cite{MR991991},
		the spectrum representing motivic cohomology \cite{MR2029171}
		and $s_{0}(\mathbf 1 _{X})$.
	\end{defi}
	
	Theorems \ref{thm.computation-slices} and \ref{thm.slices-modules}
	prove several conjectures of
	Voevodsky \cite[conjectures 1, 7, 10, 11]{MR1977582} in characteristic zero.

	\begin{thm}
	 	   \label{thm.computation-slices}
		Let
		$X$ be a separated $k$-scheme of finite type with structure map
		$g:X\rightarrow k$.  Then:
		   \begin{enumerate}
		    \item \label{thm.computation-slices.1}The zero slice of the sphere spectrum,
							$\mathbf{HZ} _{X}^{\slicefilt}$ is isomorphic to
							$\mathbf L g^{\ast} (\mathbf{HZ}_{k})$ in $\stablehomotopyX$.
		    \item   \label{thm.computation-slices.2}The zero slice of the sphere spectrum,
							$\mathbf{HZ} _{X}^{\slicefilt}$ is a 
							cofibrant ring spectrum in $Spt(\mathcal M _{X})$.
			\item   \label{thm.computation-slices.3}The zero slice of the sphere spectrum,
							$\mathbf{HZ} _{X}^{\slicefilt}$ is an $E_{\infty}$-ring 
							spectrum in $\stablehomotopyX$.  Moreover,
							if $X$ is smooth then 
							$\mathbf{HZ} _{X}^{\slicefilt}$ is a 
							commutative ring spectrum in $\TspectraX$.
		    \item   \label{thm.computation-slices.4}For every integer $q$, 
							$s_{q}(\mathbf{KH} _{X})$ is isomorphic to 
							$T^{q}_{X}\wedge \mathbf{HZ} _{X}^{\slicefilt}$
							in $\stablehomotopyX$.
		    \item   \label{thm.computation-slices.5}If we consider rational coefficients
							and $X$ is geometrically unibranch then
							$\mathbf{HZ}_{X}^{\slicefilt} \otimes \mathbb Q$,
							$s_{q}(\mathbf{KH} _{X})\otimes \mathbb Q$
							are respectively isomorphic in $\stablehomotopyX$ to 
							$\mathbf{HZ}_{X}\otimes \mathbb Q$,
							$(T^{q}_{X}\wedge \mathbf{HZ}_{X})\otimes \mathbb Q$.
		   \end{enumerate}
	\end{thm}
	\begin{proof}
		(\ref{thm.computation-slices.1}):  It is clear that
		$\mathbf 1 _{X}\cong \mathbf L g^{\ast}(\mathbf 1 _{k})$ in
		$\stablehomotopyX$.   Therefore, by Theorem \ref{thm.compatibility-with-res-sing} we deduce
		the existence of
		the following natural isomorphisms in $\stablehomotopyX$
			\[	s_{0}(\mathbf 1 _{X})\cong s_{0}(\mathbf L g^{\ast} \mathbf 1 _{k})
				\cong \mathbf L g^{\ast}(s_{0}\mathbf 1 _{k})
			\]
		Finally, the result follows from the work of Levine \cite[Thm. 10.5.1]{MR2365658}
		and Voevodsky 
		\cite[Thm. 6.6]{MR2101286}, which implies that the unit map $u:\mathbf 1 _{k}\rightarrow
		\mathbf{HZ}_{k}$ induces the following
		isomorphisms in $\stablehomotopyk$
			\[	s_{0}(u):s_{0}\mathbf 1 _{k}
				 \rightarrow s_{0}\mathbf{HZ} _{k}\cong \mathbf{HZ} _{k}
			\]
		(\ref{thm.computation-slices.2}):  We observe that
		$\mathbf{HZ}_{k}$ is a ring spectrum in
		$Spt(\mathcal M _{X})$ (see \cite[Lemma 4.6]{MR2029171}).  Moreover,
		by \cite[Thm. 4.1(3)]{MR1734325},
		\cite[Thm. A.38]{MR2597741}
		and \cite[Prop. 4.19]{MR1787949},
		there exists
		a weak equivalence  
			$$w:\mathbf{HZ}_{k}^{c}\rightarrow \mathbf{HZ}_{k}$$
		in $Spt(\mathcal M _{k})$ such that 
		$\mathbf{HZ}_{k}^{c}$ is a cofibrant
		ring spectrum in $Spt(\mathcal M _{k})$.  
		On the other hand, proposition A.47 in \cite{MR2597741} implies 
		that
			\[	g^{\ast}:Spt(\mathcal M _{k})\rightarrow Spt(\mathcal M _{X})
			\]
		is a strict symmetric monoidal left Quillen functor.  Therefore,
		$g^{\ast} (\mathbf{HZ}_{k}^{c})$ is a cofibrant ring spectrum in $Spt(\mathcal M _{X})$
		which is isomorphic to
		$\mathbf L g^{\ast} (\mathbf{HZ}_{k})$
		in $\stablehomotopyX$.  Thus, the result follows from
		(\ref{thm.computation-slices.1}) above.
		
		(\ref{thm.computation-slices.3}):  The fact that $\mathbf{HZ} _{X}^{\slicefilt}$ is an 
		$E_{\infty}$-ring spectrum in $\stablehomotopyX$ follows from \cite{Gutierrez:2010fk}.
		On the other hand, if the map $g$ is smooth, then 
		$\mathbf L g^{\ast}=g^{*}$ since $\mathbf L g^{\ast}$ admits
		a left adjoint $\mathbf L g _{\sharp}$ (see 
		\cite[p. 104: Cor. 1.24]{MR1813224} and
		\cite[p. 108: line 3 and Prop. 2.9]{MR1813224}).
		By \cite[Lemma 4.6]{MR2029171}, $\mathbf{HZ}_{k}$ is a commutative ring spectrum in
		$Spt(\mathcal M _{X})$.
		Thus, $g^{\ast} (\mathbf{HZ}_{k})$ is a commutative ring spectrum in $Spt(\mathcal M _{X})$
		which is isomorphic to
		$\mathbf L g^{\ast} (\mathbf{HZ}_{k})$
		in $\stablehomotopyX$.  Finally, the result follows from
		(\ref{thm.computation-slices.1}) above.

		(\ref{thm.computation-slices.4}):  It follows from 
		\cite[section 6.2]{MR1648048} 
		(see also \cite[Thm. 2.15 and Prop. 3.8]{Cisinski:2010fk}) that
		$\mathbf{KH}_{X}=\mathbf L g^{\ast}(\mathbf{KH}_{k})$.  Now, by
		Theorem \ref{thm.compatibility-with-res-sing} there exist the following
		natural isomorphisms in $\stablehomotopyX$
			\[	s_{q}\mathbf{KH}_{X}\cong s_{q}(\mathbf L g^{\ast}\mathbf{KH}_{k})
				\cong \mathbf L g^{\ast}(s_{q}\mathbf{KH}_{k})
			\]
		Finally, the work of Levine 
		\cite[Thms. 6.4.2 and 9.0.3]{MR2365658} implies that
		$s_{q}\mathbf{KH}_{k}$ is isomorphic in $\stablehomotopyk$ to
		$T^{q}_{k}\wedge\mathbf{HZ}_{k}$.  Thus
			\begin{align*}	s_{q}\mathbf{KH}_{X} &
								\cong \mathbf L g^{\ast}(s_{q}\mathbf{KH}_{k})
								\cong \mathbf L g^{\ast}(T^{q}_{k}\wedge \mathbf{HZ}_{k}) \\
								& \cong T^{q}_{X}\wedge \mathbf L g^{\ast}(\mathbf{HZ}_{k})
								\cong T^{q}_{X}\wedge \mathbf{HZ}_{X}^{\slicefilt}
			\end{align*}
		as we wanted.

		(\ref{thm.computation-slices.5}):  The work of Cisinski and D{\'e}glise 
		\cite[Cor. 15.1.6(2)]{mixedmotives} implies that under these conditons 
		$\mathbf L g^{\ast}(\mathbf{HZ}_{k})\otimes \mathbb Q$
		is isomorphic to $\mathbf{HZ}_{X}\otimes \mathbb Q$ in $\stablehomotopyX$.
		Therefore, the result follows from (\ref{thm.computation-slices.1}) 
		and (\ref{thm.computation-slices.4}) above.
	\end{proof}
	
	\begin{cor}
			\label{cor.mainthm}
		Let  $h:X\rightarrow Y$ be a map of separated
		$k$-schemes of finite type, and
		$q\in \mathbb Z$ an arbitrary integer.
		Then:
			\begin{enumerate}
				\item \label{cor.mainthm.a}  There exists a canonical isomorphism
					in $\stablehomotopyX$
					\[	\mathbf L h^{\ast}(\mathbf{HZ} _{Y}^{\slicefilt})\cong
						\mathbf{HZ} _{X}^{\slicefilt}
					\]
				\item \label{cor.mainthm.b}  There exists a canonical isomorphism
					in $\stablehomotopyX$
					\[	\mathbf L h^{\ast}(s_{q}(\mathbf{KH} _{Y}))\cong
						s_{q}(\mathbf{HZ} _{X}^{\slicefilt})
					\]
			\end{enumerate}
	\end{cor}
	\begin{proof}
		This follows directly from Corollary \ref{cor.pullback.always.comp} together with
		\eqref{thm.computation-slices.1} and \eqref{thm.computation-slices.4}
		in Theorem \ref{thm.computation-slices}.
	\end{proof}

	\begin{rmk}
	 	   \label{rmk.computation-slices}
	We may consider Theorem
	\ref{thm.computation-slices}\eqref{thm.computation-slices.4} as an extension
	of the computation of Levine \cite[Thms. 6.4.2 and 9.0.3]{MR2365658}
	from fields to schemes of finite type, however
	notice that we need to assume that our base scheme is defined over a field of characteristic zero
	whereas \cite{MR2365658}  holds over perfect fields.  

	Similarly, we may consider
	Theorem \ref{thm.computation-slices}\eqref{thm.computation-slices.1}
	as an extension of the computation of 
	Voevodsky \cite[Thm. 6.6]{MR2101286} 
	and Levine \cite[Thm. 10.5.1]{MR2365658}, but
	\cite{MR2365658} also holds over perfect fields whereas we need to assume that
	our base scheme is defined over a field of characteristic zero.
	\end{rmk}

	\begin{thm}
	 	\label{thm.slices-modules}
	  Let $E$ be an arbitrary symmetric $T_{X}$-spectrum
	  in $Spt(\mathcal M _{X})$ and $q\in \mathbb Z$
	  an arbitrary integer.
		\begin{enumerate}
		 \item \label{thm.slices-modules.a}  The $q$-slice of $E$, $s_{q}(E)$
			has a natural structure of
			$\mathbf{HZ}_{X}^{\slicefilt}$-module in $Spt(\mathcal M _{X})$.
		 \item \label{thm.slices-modules.b}  If we consider rational coefficients
			and $X$ is geometrically unibranch then $s_{q}(E)\otimes \mathbb Q$
			has a natural structure
			of $\mathbf{HZ}_{X}\otimes \mathbb Q$-module in $Spt(\mathcal M _{X})$,
			in particular $s_{q}(E)\otimes \mathbb Q$ has transfers.
		\end{enumerate}

	\end{thm}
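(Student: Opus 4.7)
The plan is to realize the module structure at the strict, point-set level of $Spt(\mathcal M_X)$ rather than merely in the triangulated category $\stablehomotopyX$, since the statement explicitly asks for a module structure in $Spt(\mathcal M_X)$. By Theorem \ref{thm.computation-slices}(\ref{thm.computation-slices.2}), $\mathbf{HZ}_X^{\slicefilt}$ is already a strict cofibrant commutative ring spectrum, so the only remaining task is to endow $s_q(E)$ with a compatible strict action and verify the axioms.

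For part (\ref{thm.slices-modules.a}), I would first lift the slice tower to a strictly functorial construction on $Spt(\mathcal M_X)$, for instance via a right Bousfield localization at the generators $C^{q+1}_{eff}(X)$, so that $f_q$ is modeled by a (co)fibrant replacement functor and $s_q$ becomes a genuine endofunctor of $Spt(\mathcal M _{X})$. The key compatibility with the monoidal structure is the observation of Remark \ref{rmk.axioms}(\ref{rmk.axioms3}): smashing with a cofibrant object of $\stablehomotopyX^{eff}$ preserves each piece $\Sigma _{T}^{q+1}\stablehomotopyX^{eff}$ of the filtration. This yields a lax-monoidal pairing of slices, which I would combine with the tautological action $\mathbf 1 _{X}\wedge E\to E$ to produce the composite
$$s_{0}(\mathbf 1 _{X})\wedge s_{q}(E) \longrightarrow s_{q}(\mathbf 1 _{X}\wedge E) \longrightarrow s_{q}(E).$$
Associativity and unitality of this map follow formally from the corresponding axioms for $\mathbf 1 _{X}$ together with the naturality of the pairings, so that the composite exhibits $s_q(E)$ as a strict $\mathbf{HZ}_{X}^{\slicefilt}$-module.

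The main obstacle is precisely the construction of this lax-monoidal refinement of the slice tower at the model-category level, that is, producing a strictly functorial $s_q$ whose value on the ring spectrum $\mathbf 1 _{X}$ is a ring spectrum and whose value on modules is a module. Once this is in place, part (\ref{thm.slices-modules.b}) follows quickly: tensoring the module structure of (\ref{thm.slices-modules.a}) with $\mathbb Q$ and invoking the identification $\mathbf{HZ}_{X}^{\slicefilt}\otimes \mathbb Q \cong \mathbf{HZ}_{X}\otimes \mathbb Q$ in $\stablehomotopyX$ from Theorem \ref{thm.computation-slices}(\ref{thm.computation-slices.4}) endows $s_{q}(E)\otimes \mathbb Q$ with an $\mathbf{HZ}_{X}\otimes \mathbb Q$-module structure. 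The transfers conclusion then follows from Cisinski--D\'eglise's equivalence between the category of $\mathbf{HZ}_{X}\otimes \mathbb Q$-modules and Voevodsky's category of rational motives with transfers over a geometrically unibranch $X$, which automatically equips any module over $\mathbf{HZ}_{X}\otimes \mathbb Q$ with transfers.
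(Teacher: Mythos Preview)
Your plan is essentially the same as the paper's approach: the paper's proof is a two-line appeal to the cellularity of $Spt(\mathcal M_X)$ and the cofibrancy of the generators $F_n(S^r\wedge\mathbb G_m^s\wedge U_+)$, after which it invokes the author's own results (Theorem~2.1 of \cite{CRNOTE} and Lemma~3.6.21(3), Theorem~3.6.20 of \cite{thesis}) together with Theorem~\ref{thm.computation-slices}. Those cited results implement exactly the strictly functorial, lax-monoidal model for the slice functors that you sketch, using colored operads to rigidify the pairing $s_0(\mathbf 1_X)\wedge s_q(E)\to s_q(E)$ and the ring structure on $s_0(\mathbf 1_X)$ simultaneously.

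One point to sharpen: your sentence ``Associativity and unitality of this map follow formally'' is true only at the level of $\stablehomotopyX$. At the point-set level in $Spt(\mathcal M_X)$ this is precisely the obstacle you flag two sentences later, and it does \emph{not} follow formally from naturality of the pairings; one genuinely needs the operadic machinery (or an equivalent rigidification) that the paper imports from \cite{CRNOTE} and \cite{thesis}. So your plan is correct, but be aware that the step you call formal is the one the paper outsources, and that cellularity of the model structure is the hypothesis that makes those external results applicable. Your treatment of part~(\ref{thm.slices-modules.b}) via Theorem~\ref{thm.computation-slices}(\ref{thm.computation-slices.4}) and the Cisinski--D\'eglise comparison matches the paper exactly.
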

	\begin{proof}
	   By construction,  $Spt(\mathcal M _{X})$
	   is cellular \cite{MR1944041} 
	   and the spectra
	   $F_{n}(S^{r}\wedge \mathbb G _{m}^{s}\wedge U_{+})$ are all cofibrant in $Spt(\mathcal M _{X})$
	   for every $U\in Sm_{X}$ and integers $n,r,s\geq 0$
	   (see \cite[Lem. A.10]{MR2597741}).
	   
	   Therefore, \cite[Thm. 2.1]{MR2576905} and
	   \cite[Lem. 3.6.21(3) and Thm. 3.6.20]{MR2807904}
	   hold in $Spt(\mathcal M _{X})$.  Then, the result follows directly
	   from Theorem \ref{thm.computation-slices}.
	\end{proof}

	\begin{defi}
		\label{def.slicefilt-motives}
	   Let $\mathbf{HZ}_{X}^{\slicefilt}\text{-}\mathrm{mod}$ be the category of left 
	   $\mathbf{HZ}_{X}^{\slicefilt}$-modules in $Spt(\mathcal M _{X})$ equipped with the model structure
	   induced by the adjuntion
		$$(\mathbf{HZ}_{X}^{\slicefilt}\wedge -,U,\varphi):Spt(\mathcal M _{X})\rightarrow 
			\mathbf{HZ}_{X}^{\slicefilt}\text{-}\mathrm{mod}
		$$
	  i.e. a map $f$ in $\mathbf{HZ}_{X}^{\slicefilt}\text{-}\mathrm{mod}$ 
	  is a fibration or a weak equivalence
	  if and only if $Uf$ is a fibration or a weak equivalence in $Spt(\mathcal M _{X})$.
	  Let $DM_{X}^{\slicefilt}$ denote the homotopy category of
	  $\mathbf{HZ}_{X}^{\slicefilt}\text{-}\mathrm{mod}$, which is triangulated.
	\end{defi}

	\begin{thm}
			\label{thm.motivic-cats}
		The $2$-functor $X\mapsto DM_{X}^{\slicefilt}$ has the structure of a motivic
		category in the sense of Cisinski and D{\'e}glise \cite{mixedmotives},
		and the adjunction
			\[	(\mathbf{HZ}_{X}^{\slicefilt}\wedge ^{\mathbf{L}} -,\mathbf{R}U,\varphi):\stablehomotopyX \rightarrow 
			DM_{X}^{\slicefilt}
			\]
		is a morphism of motivic categories $\mathcal{SH}\rightarrow DM^{\slicefilt}$
		in the category $Sch_{K}$ of separated $k$-schemes of finite type.
		
		In particular, $X\mapsto DM_{X}^{\slicefilt}$ is a closed symmetric monoidal
		homotopic stable $2$-functor
		in the sense of Ayoub, i.e.
		given a map $g$ in $Sch_{k}$ the functors $\mathbf L g^{\ast}$,
		$\mathbf R g_{\ast}$, $g_{!}$, $g^{!}$ exist and satisfy
		the formalism of \cite[Scholium 1.4.2]{MR2423375}.  Moreover, $DM_{X}^{\slicefilt}$
		is a closed symmetric triangulated category satisfying the
		formalism of  \cite[Chapter 2]{MR2423375}.
	\end{thm}
	\begin{proof}
		Theorem \ref{thm.computation-slices}(\ref{thm.computation-slices.1})-(\ref{thm.computation-slices.2})
		implies that $X\mapsto \mathbf{HZ}_{X}^{\slicefilt}$ is a family of  
		cofibrant ring spectra in $Spt(\mathcal M _{X})$
		which is stable under pullback in the category of separated $k$-schemes of finite type.  
		Hence Propositions 4.2.11, 4.2.16 
		and Corollary 2.4.9 in \cite{mixedmotives} imply that
		$(\mathbf{HZ}_{X}^{\slicefilt}\wedge ^{\mathbf{L}} -,\mathbf{R}U,\varphi)$
		is a morphism of motivic categories and that 
		 $X\mapsto DM_{X}^{\slicefilt}$ is a 
		homotopic stable $2$-functor
		in the sense of Ayoub.
		Finally, \eqref{thm.computation-slices.2} and \eqref{thm.computation-slices.3} in
		Theorem \ref{thm.computation-slices} imply that
		$DM_{X}^{\slicefilt}$
		is a closed symmetric triangulated category.
	\end{proof}
	
	\begin{thm}
			\label{thm.comparing-motcats}
		If our base scheme is a field $k$ of characteristic zero, then $DM_{k}^{\slicefilt}$ is 
		naturally equivalent as a tensor triangulated category to
		Voevodsky's big category of motives $DM_{k}$.
		
		Therefore, the $2$-functor $X\mapsto DM_{X}^{\slicefilt}$ provides a natural framework for a theory of mixed motives
		in the category of separated $k$-schemes of finite type.
	\end{thm}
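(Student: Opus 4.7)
The plan is to reduce the theorem to two already-known ingredients: first, the identification of $\mathbf{HZ}_{X}^{\slicefilt}$ with the motivic cohomology ring spectrum $\mathbf{HZ}_{X}$ as a strict (cofibrant) commutative ring spectrum when $X$ is a field, and second, the comparison of the homotopy category of modules over $\mathbf{HZ}_{X}$ in $Spt(\mathcal M_{X})$ with Voevodsky's big category of motives $DM_{X}$.

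For the first step, theorem \ref{thm.computation-slices}(\ref{thm.computation-slices.1}) together with its proof exhibits the isomorphism $s_{0}(u):\mathbf{HZ}_{X}^{\slicefilt}=s_{0}\mathbf 1_{X}\to s_{0}\mathbf{HZ}_{X}\cong \mathbf{HZ}_{X}$ in $\stablehomotopyX$. Since $u:\mathbf 1_{X}\to \mathbf{HZ}_{X}$ is a map of commutative ring spectra and $s_{0}$ is compatible with the monoidal structure (compare remark \ref{rmk.axioms}(\ref{rmk.axioms3})), this isomorphism is a morphism of ring spectra in $\stablehomotopyX$. Using the cofibrancy asserted in theorem \ref{thm.computation-slices}(\ref{thm.computation-slices.2}) together with the cofibrant replacement $\mathbf{HZ}_{X}^{c}$ introduced in its proof, I would rectify $s_{0}(u)$ to an honest weak equivalence of cofibrant commutative ring spectra in $Spt(\mathcal M_{X})$. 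By a standard Schwede--Shipley style argument, adapted to the cellular monoidal setting exactly as in the proof of theorem \ref{thm.slices-modules}, this rectified equivalence induces a Quillen equivalence between $\mathbf{HZ}_{X}^{\slicefilt}\text{-}\mathrm{mod}$ and $\mathbf{HZ}_{X}\text{-}\mathrm{mod}$, hence an equivalence of triangulated tensor categories $DM_{X}^{\slicefilt}\simeq \mathrm{Ho}(\mathbf{HZ}_{X}\text{-}\mathrm{mod})$.

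For the second step, I would invoke the comparison theorem of R\"ondigs--\O stv\ae r, reformulated in the present framework by Cisinski--D\'eglise \cite{mixedmotives}, which identifies $\mathrm{Ho}(\mathbf{HZ}_{X}\text{-}\mathrm{mod})$ with Voevodsky's big category of motives $DM_{X}$ as a triangulated tensor category (over a field of characteristic zero this is unconditional). Composing the two equivalences yields the desired identification $DM_{X}^{\slicefilt}\simeq DM_{X}$. The second assertion of the theorem, that $X\mapsto DM_{X}^{\slicefilt}$ provides a natural framework for mixed motives over $k$-schemes of finite type, then follows by combining this identification at the level of fields with the motivic category structure already established in theorem \ref{thm.motivic-cats}.

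The hard part is the first step: the isomorphism $s_{0}(u)$ lives a priori only in $\stablehomotopyX$, and to transfer modules along it one needs a strict ring map between cofibrant representatives rather than merely a homotopy ring equivalence. This rectification relies on the existence of a well-behaved model structure on commutative monoids in $Spt(\mathcal M_{X})$ and on the strict cofibrancy guaranteed by theorem \ref{thm.computation-slices}(\ref{thm.computation-slices.2}); once this is in hand, the passage from a weak equivalence of cofibrant ring spectra to a Quillen equivalence of module categories is routine.
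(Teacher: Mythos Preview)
Your proposal is correct and follows essentially the same route as the paper: both use the weak equivalence $s_0(u):\mathbf{HZ}_X^{\slicefilt}\to\mathbf{HZ}_X$ coming from Levine and Voevodsky, pass from it to an equivalence of module categories, and then invoke the R\"ondigs--\O stv\ae r comparison with $DM_X$. The paper handles the passage to module categories by a direct citation to \cite[proposition 2.8.5]{thesis} and \cite[theorem A.38]{BGL} rather than spelling out the Schwede--Shipley argument you sketch, so the rectification step you flag as the hard part is precisely what is packaged in those references.
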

	\begin{proof}
		By construction, $DM_{k}^{\slicefilt}$ is the homotopy category of 
		$\mathbf{HZ}_{k}^{\slicefilt}$-modules in $Spt(\mathcal M _{k})$,
		where $\mathbf{HZ}_{k}^{\slicefilt}$ is the zero slice of the sphere spectrum
		$s_{0}(\mathbf  1 _{k})$.  On the other hand,  
		it follows from \cite[Thm. 10.5.1]{MR2365658}, 
		\cite[Thm. 6.6]{MR2101286} that the unit map 
		$u:\mathbf 1 _{k}\rightarrow \mathbf{HZ} _{k}$ induces a weak
		equivalence $s_{0}(u):\mathbf{HZ}_{k}^{\slicefilt}\rightarrow \mathbf{HZ}_{k}$.
		
		Thus, by 
		\cite[Prop. 2.8.5]{MR2807904}
		and \cite[Thm. A.38]{MR2597741} we deduce that 
		$DM_{k}^{\slicefilt}$ is naturally equivalent as
		a tensor triangulated category to the homotopy category of
		$\mathbf{HZ}_{k}$-modules in $Spt(\mathcal M _{k})$.  Finally,
		it follows from \cite[Thm. 1]{MR2435654} that
		Voevodsky's category of motives $DM_{k}$ is naturally equivalent
		as a tensor triangulated category to the homotopy category
		of $\mathbf{HZ}_{k}$-modules in $Spt(\mathcal M _{k})$.
	\end{proof}
	
	Let $\mathbf{H}_{\mathbb B ,X} \in Spt(\mathcal M _{X})$ denote the Beilinson motivic
	cohomology spectrum introduced by Cisinski and D{\'e}glise \cite[Def. 13.1.2]{mixedmotives}.
	It follows in particular from
	Corollary 13.2.6 in \cite{mixedmotives}  that $\mathbf{H}_{\mathbb B ,X}$ is a commutative
	cofibrant ring spectrum in $Spt(\mathcal M _{X})$ which is stable under pullback in the category
	of separated schemes of finite type over $k$.
	
	\begin{thm}
			\label{thm.Beilinson-motives}
		The Beilinson motivic cohomology spectrum $\mathbf{H}_{\mathbb B ,X}$ is
		naturally isomorphic to $\mathbf{HZ} ^{\slicefilt}_{X}\otimes \mathbb Q$ in $\stablehomotopyX$, thus
		the homotopy category of $\mathbf{H}_{\mathbb B ,X}$-modules $\mathrm{Ho}(\mathbf{H}_{\mathbb B ,X})$
		is equivalent to the homotopy category of left $\mathbf{HZ} ^{\slicefilt}_{X}$-modules with rational
		coefficients.  
		
		Hence, we conclude that modulo torsion 
		$\mathrm{Ho}(\mathbf{H}_{\mathbb B ,X})$ and $DM_{X}^{\slicefilt}$
		are equivalent as tensor triangulated categories.
	\end{thm}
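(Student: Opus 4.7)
The plan is to reduce the comparison to the case $X = \operatorname{Spec} k$ and then propagate the isomorphism via $\mathbf L g^{\ast}$, exploiting the fact that both spectra in question are stable under pullback. First, observe that $\mathbf{HZ}^{\slicefilt}_{X} = s_{0}(\mathbf 1_{X})$ is, by Theorem \ref{thm.computation-slices}(\ref{thm.computation-slices.1}) and its proof, naturally isomorphic to $\mathbf L g^{\ast}(\mathbf{HZ}^{\slicefilt}_{k}) \cong \mathbf L g^{\ast}(\mathbf{HZ}_{k})$ in $\stablehomotopyX$; since $\mathbf L g^{\ast}$ is a left adjoint that commutes with $\mathbb Q$-localization, one gets $\mathbf{HZ}^{\slicefilt}_{X}\otimes \mathbb Q \cong \mathbf L g^{\ast}(\mathbf{HZ}_{k}\otimes \mathbb Q)$. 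On the other side, the Beilinson motivic cohomology spectrum is stable under pullback by Corollary 13.2.6 in \cite{mixedmotives}, so $\mathbf{H}_{\mathbb B, X} \cong \mathbf L g^{\ast}(\mathbf{H}_{\mathbb B, k})$.

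Over the base field $k$, the spectrum $\mathbf{HZ}_{k}\otimes \mathbb Q$ is geometrically unibranch (trivially, as $k$ is a field), so Theorem \ref{thm.computation-slices}(\ref{thm.computation-slices.4}) and the standard identification in Cisinski--D\'eglise \cite{mixedmotives} give $\mathbf{H}_{\mathbb B, k} \cong \mathbf{HZ}_{k}\otimes \mathbb Q$ in $\stablehomotopyk$. Applying $\mathbf L g^{\ast}$ to this isomorphism and chaining the identifications above yields the desired natural isomorphism $\mathbf{HZ}^{\slicefilt}_{X}\otimes \mathbb Q \cong \mathbf{H}_{\mathbb B, X}$ in $\stablehomotopyX$.

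For the equivalence of homotopy categories of modules, I would proceed as in the proof of Theorem \ref{thm.comparing-motcats}: both $\mathbf{H}_{\mathbb B, X}$ and $\mathbf{HZ}^{\slicefilt}_{X}$ are cofibrant ring spectra in $Spt(\mathcal M_{X})$ (by \cite[Corollary 13.2.6]{mixedmotives} and Theorem \ref{thm.computation-slices}(\ref{thm.computation-slices.2}) respectively), so an isomorphism in $\stablehomotopyX$ between their rationalizations can be lifted to a zig-zag of weak equivalences of ring spectra via \cite[proposition 2.8.5]{thesis} and the rigidity machinery of \cite[theorem A.38]{BGL}. Once such a zig-zag is produced, base-change of modules along each weak equivalence is a Quillen equivalence, yielding the claimed equivalence between $\mathrm{Ho}(\mathbf{H}_{\mathbb B, X})$ and the homotopy category of left $\mathbf{HZ}^{\slicefilt}_{X}$-modules with rational coefficients; the last clause follows immediately since rationalizing $DM^{\slicefilt}_{X}$ identifies it with modules over $\mathbf{HZ}^{\slicefilt}_{X}\otimes \mathbb Q$.

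The main obstacle is the lift of the $\stablehomotopyX$-isomorphism to a comparison of ring spectra suitable for transporting module structures. The underlying isomorphism in the homotopy category is essentially automatic once pullback compatibility is in place, but the ring-spectrum comparison required for the module category equivalence rests on the rectification/cofibrancy results that allow one to replace a homotopical isomorphism of ring spectra by a Quillen equivalence on module categories; invoking \cite[proposition 2.8.5]{thesis} (or equivalently the monoidal comparison in the Cisinski--D\'eglise framework) is what makes this step go through.
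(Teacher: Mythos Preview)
Your argument is correct and follows essentially the same route as the paper: both spectra are stable under pullback, so the comparison reduces to the base field, where Cisinski--D\'eglise identify $\mathbf{H}_{\mathbb B,k}$ with $\mathbf{HZ}_k\otimes\mathbb Q\cong\mathbf{HZ}^{\slicefilt}_k\otimes\mathbb Q$, and then \cite[proposition 2.8.5]{thesis} together with \cite[theorem A.38]{BGL} upgrades the homotopy-category isomorphism of ring spectra to an equivalence of module categories. One trivial slip of phrasing: it is the scheme $\operatorname{Spec} k$, not the spectrum $\mathbf{HZ}_k\otimes\mathbb Q$, that is geometrically unibranch (and in fact over $k$ you only need \cite[corollary 15.1.6(1)]{mixedmotives} directly, without passing through Theorem~\ref{thm.computation-slices}(\ref{thm.computation-slices.4})).
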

	\begin{proof}
		By
		\cite[Prop. 2.8.5]{MR2807904}
		and \cite[Thm. A.38]{MR2597741}, it suffices to prove that
		$\mathbf{H}_{\mathbb B ,X}$ is
		naturally isomorphic to $\mathbf{HZ} ^{\slicefilt}_{X}\otimes \mathbb Q$ in $\stablehomotopyX$.
		
		It follows from Theorem \ref{thm.computation-slices}(\ref{thm.computation-slices.1}) 
		that $\mathbf{HZ} ^{\slicefilt}_{X}\otimes \mathbb Q$ is stable under pullback in the category
		of separated schemes of finite type over $k$, 
		on the other hand Corollary 13.2.6 
		in \cite{mixedmotives} implies in particular
		that $\mathbf{H}_{\mathbb B ,X}$ is also stable under pullback.
		Therefore, it suffices to show that $\mathbf{H}_{\mathbb B ,k}$ and 
		$\mathbf{HZ} ^{\slicefilt}_{k}\otimes \mathbb Q$ are isomorphic in $\stablehomotopyk$
		for the base field $k$.
		
		However, Corollary 15.1.6(1) in \cite{mixedmotives} implies that $\mathbf{H}_{\mathbb B ,k}$ and
		$\mathbf{HZ}_{k}\otimes \mathbb Q$ are naturally isomorphic in $\stablehomotopyk$,
		and finally it follows from Theorem \ref{thm.computation-slices}(\ref{thm.computation-slices.1}) 
		that $\mathbf{HZ}_{k}\otimes \mathbb Q$ and $\mathbf{HZ} ^{\slicefilt}_{k}\otimes \mathbb Q$ 
		are also naturally isomorphic in $\stablehomotopyk$.
		This finishes the proof.
	\end{proof}

\end{section}

\section*{Acknowledgements}
	The author would like to thank Denis-Charles Cisinski for several conversations
	and suggestions which led to some of the arguments in sections 
	\ref{sect-2} and \ref{sect-3}, Marc Levine
	for pointing out several errors in a previous version of this note and
	Fr{\'e}d{\'e}ric D{\'e}glise for suggesting this problem.

\bibliography{biblio_fsf}
\bibliographystyle{abbrv}

\end{document}